\def\zz{\mathbb{Z}}
\def\nn{\mathbb{N}}
\def\sgn{{\mathop\mathrm{\,sgn\,}}}
\def\ls{\lesssim}
\def\1bf{\mathbf{1}}
\def\0bf{\mathbf{0}}
\def\red{\color{red}}
\newtheorem{theorem}{Theorem}[section]
\newtheorem{lemma}[theorem]{Lemma}
\newtheorem{corollary}[theorem]{Corollary}
\theoremstyle{definition}
\newtheorem{remark}[theorem]{Remark}
\newtheorem{definition}[theorem]{Definition}
\renewcommand{\appendix}{\par
   \setcounter{section}{0}%
   \setcounter{subsection}{0}%
   \setcounter{subsubsection}{0}%
   \gdef\thesection{\@Alph\c@section}%
   \gdef\thesubsection{\@Alph\c@section.\@arabic\c@subsection}%
   \gdef\theHsection{\@Alph\c@section.}%
   \gdef\theHsubsection{\@Alph\c@section.\@arabic\c@subsection}%
   \csname appendixmore\endcsname
 }
\numberwithin{equation}{section}
\begin{document}

\arraycolsep=1pt

\title{\bf\Large
Fractional Fourier Transforms
Meet Riesz Potentials and Image Processing
\footnotetext{\hspace{-0.35cm} 2020 {\it Mathematics Subject
Classification}.
Primary 26A33; Secondary   42B35, 42A38, 94A08.
\endgraf {\it Key words and phrases.} fractional Fourier
transform, fractional Riesz potential,
numerical picture simulation, image encryption, image decryption.
\endgraf This project is partially supported by
the National Natural Science Foundation of China
(Grant Nos.\ 12071197, 12071052 and 11971058) and the
National Key Research and Development Program of China
(Grant No. 2020YF A0712900).}}
\author{Zunwei Fu, Yan Lin, Dachun Yang\footnote{Corresponding
author, E-mail: \texttt{dcyang@bnu.edu.cn}/{\red February 24,
2023}/Final version.}  \ \  and Shuhui Yang}
\date{}
\maketitle

\vspace{-0.8cm}

\begin{center}
\begin{minipage}{13cm}
{\small{\textbf{Abstract}}\quad}
Via chirp functions from fractional Fourier transforms, the authors
introduce fractional Riesz potentials related to chirp functions,
establish their relations
with fractional Fourier transforms, fractional Laplace operators
related to chirp functions, and fractional Riesz transforms related to chirp functions, and
obtain their boundedness  on rotation invariant spaces related to chirp functions. Finally,
the authors give the numerical image simulation of fractional
Riesz potentials related to chirp functions and their applications in image processing.
The main novelty of this article is to propose a new image
encryption method for the double phase coding based on the
fractional Riesz potential related to chirp functions. The symbol of fractional Riesz
potentials related to chirp functions essentially provides greater degrees of freedom and greatly
makes the information more secure.
\end{minipage}
\end{center}
\vspace{0.2cm}


\section{Introduction}\label{sec1}

The image processing   has always been an important topic of
 information sciences, which plays an important role in applied   sciences and hence attracts a lot of  attention (see, for instance, \cite{bplx21,elx13,gcn21,gcn21-2,hcjn22,lzox15,rwdl2019,s14,wn21,wscy2022}).

On the other hand, it is well known that the Fourier transform is one of the most basic important tools
in both pure and applied mathematics. It is also a standard and
powerful tool for analyzing and processing stationary signals,
but it is limited in processing and analyzing non-stationary
signals. In what follows, we use $\mathscr{S}(\mathbb{R}^n)$  to
denote the set of all Schwartz functions equipped with well-known
topology determined by a countable family of norms, and also
$\mathscr{S}'(\mathbb{R}^n)$ the set of all continous linear
functionals on $\mathscr{S}(\mathbb{R}^n)$ equipped with the
weak$-\ast$ topology.

\begin{definition}\label{def-ft}
For any $f\in\mathscr{S}(\mathbb{R}^n)$, its \emph{Fourier
transform} $\widehat{f}$ or $\mathcal{F}(f)$ is defined by
setting, for any $\boldsymbol{\xi} \in \mathbb{R}^n$,
$$\widehat{f}({\boldsymbol{\xi}}):=\mathcal{F}(f):=
\int_{\mathbb{R}^n}f({\boldsymbol{x}})e^{-2\pi
i{\boldsymbol{x}}\cdot{\boldsymbol{\xi}}}\,d{\boldsymbol{x}}$$
and the \emph{inverse Fourier transform} $f^{\vee}$ of $f$
is defined by setting, for any $\boldsymbol{x} \in
\mathbb{R}^n$, $f^{\vee}(\boldsymbol{x}):=\widehat{f}
(-\boldsymbol{x})$.
\end{definition}

The fractional Fourier transform (for short, FRFT) is proposed
and developed to analyze and process non-stationary signals.
At present, the  FRFT has been applied in many fields, such as
partial differential equation (see, for instance,
\cite{l1993}), wavelet analysis (see, for instance,
\cite{yn2003}),   complex transmission (see, for instance,
\cite{rbm1994}),
frequency filter (see, for instance,
\cite{tlw2010}), time-frequency analysis (see, for instance,
\cite{asf17,hs22,hs222,mun2001}),
  optical signal processing (see, for instance,
\cite{bs1994,np2003,oa1995,ozk2001,sds2011}), and optical image
processing (see, for instance,
\cite{dsp2001,lllwl2013}).

The FRFT originated in Wiener's work in \cite{w1929}. In 1980,
the FRFT was given by Namias in \cite{n1980}, which is mainly based
on the eigenfunction expansion method. The integral expressions
of the FRFT on $\mathscr{S}(\mathbb{R})$ and $L^2(\mathbb{R})$
were given by McBride-Kerr in \cite{mk1987} and Kerr in \cite{k1988},
respectively. In 2021, the behavior of the FRFT on $L^p(\mathbb{R})$
for any $p\in[1,2)$ was established by Chen et al. in \cite{cfgw2021}.

In recent years, the research of the multidimensional FRFT
has attracted more and more attention. In \cite{kr2020},
Kamalakkannan and Roopkumar gave the  following definition of the
multidimensional FRFT.

\begin{definition}\label{def-fft}
Let $\boldsymbol{\alpha}
:=(\alpha_1,\ldots,\alpha_n)\in \mathbb{R}^n$  and  $f\in L^1(\mathbb{R}^n)$.
The \emph{multidimensional fractional Fourier transform}
(for short, multidimensional FRFT) $\mathcal{F}
_{\boldsymbol{\alpha}}(f)$, with order $\boldsymbol{\alpha}$, of $f$
 is defined by setting, for any $\boldsymbol{u}\in \mathbb{R}^n$,
$$\mathcal{F}_{\boldsymbol{\alpha}}(f)(\boldsymbol{u}):=
\int_{\mathbb{R}^n}f(\boldsymbol{x})K_{\boldsymbol{\alpha}}
(\boldsymbol{x},\boldsymbol{u})\,d\boldsymbol{x},$$
where, for any $\boldsymbol{x}:=(x_1,\ldots,x_n),
\boldsymbol{u}:=(u_1,\ldots,u_n)\in \mathbb{R}^n$,
$$K_{\boldsymbol{\alpha}}(\boldsymbol{x},\boldsymbol{u})
:=\prod^n_{k=1}K_{\alpha_k}(x_k,u_k)$$ and, for any
$k\in \{1,\ldots,n\}$,
\begin{equation*}
K_{\alpha_k}(x_k,u_k):=\left\{
\begin{array}
[c]{ll}
{c(\alpha_k)} e^{2\pi i\{a(\alpha_k)[x_k^2+u_k^2-2b
(\alpha_k)x_ku_k]\}}
&\quad \mathrm{if}\ \alpha_k\notin\pi \mathbb{Z} ,\\
\delta(x_k-u_k) &\quad \mathrm{if}\ \alpha_k\in2\pi\mathbb{Z},\\
\delta(x_k+u_k) &\quad \mathrm{if}\ \alpha_k\in2\pi\mathbb{Z}+\pi ,
\end{array}
\right.
\end{equation*}
with $a(\alpha_k):=
\frac{\cot(\alpha_k)}{2}:=\frac{\cos(\alpha_k)}{2\sin(\alpha_k)}$, $b(\alpha_k):=\sec(\alpha_k):=\frac{1}{\cos(\alpha_k)}$,
$c(\alpha_k):=\sqrt{1-i\cot(\alpha_k)}$, and $\delta$ being
the Dirac measure at $0$.
\end{definition}

We refer the reader to \cite{kr2020,krz21,krz22,z2018,z2019,zl22} for some studies on the
multidimensional FRFT.

\begin{remark}\label{rem-fft-ft}
Let $\boldsymbol{\alpha}:=(\alpha_1, \ldots,\alpha_n)\in
\mathbb{R}^n$ with $\alpha_k\notin \pi \mathbb{Z}$ for any
$k\in\{1,\ldots,n\}$. The \emph{chirp function}
$e_{\boldsymbol{\alpha}}$ is defined by setting, for any
$\boldsymbol{x}:=(x_1, \ldots,x_n)\in \mathbb{R}^n$,
$$e_{\boldsymbol{\alpha}}(\boldsymbol{x}):=e^{2\pi
i\sum^n_{k=1}a(\alpha_k)x^2_k},$$
where, for any $k\in \{1,\ldots,n\}$, $a(\alpha_k):=
\frac{\cot(\alpha_k)}{2}$.
The chirp function is the most common nonstationary signal
in which the frequency increases (upchirp) or decreases
(downchirp) with time. It is easy to show that the
multidimensional {\rm FRFT} $\mathcal{F}_{\boldsymbol{\alpha}}(f)$
of $f$ can be expressed into that, for any
$\boldsymbol{u}:=(u_1,\ldots,u_n)\in \mathbb{R}^n$,
\begin{equation}\label{eq-1.1}
\mathcal{F}_{\boldsymbol{\alpha}}(f)
(\boldsymbol{u})=c(\boldsymbol{\alpha})e_{\boldsymbol{\alpha}}
(\boldsymbol{u})\mathcal{F}(e_{\boldsymbol{\alpha}}f)
(\widetilde{\boldsymbol{u}}),	
\end{equation}
where  $c(\boldsymbol{\alpha}):=c(\alpha_1 )\cdots c(\alpha_n)$
with $\{c(\alpha_k)\}_{k=1}^n$ the same as in Definition
\ref{def-fft} and where
$\widetilde{\boldsymbol{u}}:=(u_1\csc(\alpha_1),\ldots,u_n\csc(\alpha_n))$ with $\csc(\alpha_k):=\frac{1}{\sin(\alpha_k)}$ for any $k\in \{1,\ldots,n\}$.
Using (\ref{eq-1.1}), we can easily prove that
$\mathcal{F}_{\boldsymbol{\alpha}}(f)$ maps
$\mathscr{S}(\mathbb{R}^n)$ to $ \mathscr{S}(\mathbb{R}^n)$.
Obviously, we can rewrite that, for any $\boldsymbol{x}:=(x_1,\ldots,x_n),
\boldsymbol{u}:=(u_1,\ldots,u_n)\in \mathbb{R}^n$,
$$K_{\boldsymbol{\alpha}}(\boldsymbol{x},\boldsymbol{u})
={c(\boldsymbol{\alpha})}e_{\boldsymbol{\alpha}}
(\boldsymbol{x})e_{\boldsymbol{\alpha}}(\boldsymbol{u})
e^{-2\pi i\sum^{n}_{k=1}x_ku_k\csc(\alpha_k)}.$$
Thus, the multidimensional FRFT is closely related to chirp functions.
\end{remark}

\begin{remark}
When $\boldsymbol{\alpha}=(\frac{\pi}{2}+2k_1\pi,
\ldots,\frac{\pi}{2}+2k_n\pi)$ with
$\{k_j\}_{j=1}^n\subset \mathbb{Z}$, the multidimensional
FRFT goes back to the classical Fourier transform.
\end{remark}

In this article, via chirp functions from fractional Fourier transforms, the authors
introduce fractional Riesz potentials related to chirp functions,
establish their relations
with fractional Fourier transforms, fractional Laplace operators
related to chirp functions, and fractional Riesz transforms related to chirp functions, and
obtain their boundedness  on rotation invariant spaces related to chirp functions. Finally,
the authors give the numerical image simulation of fractional
Riesz potentials related to chirp functions and their applications in image processing.
The main novelty of this article is to propose a new image
encryption method for the double phase coding based on the
fractional Riesz potential related to chirp functions. The symbol of fractional Riesz
potentials related to chirp functions essentially provides greater degrees of freedom and greatly
makes the information more secure.

The remainder of this article is organized as follows.

In Section \ref{sec2},
based on    the multidimensional FRFT, we introduce
the fractional Riesz potential  related to chirp functions and establish the
relations among the multidimensional  FRFT,
the fractional Riesz potential related to chirp functions, the fractional Riesz
transform related to chirp functions, and the Laplace operator related to chirp functions
in  $\mathscr{S}'(\mathbb{R}^n)$. We show that, in the
rotation invariant space,
the boundedness of the fractional Riesz potential is
equivalent to the boundedness of the classical Riesz
potential. In addition, through the relation between
the fractional Fourier transform and the Laplace operator
related to chirp functions, we introduce the fractional
Laplace operator related to chirp functions, which, together with
the fractional Riesz potential, provides  a theoretical basis of
the application of image encryption.

An electronic image simulation of
the fractional Riesz potential related to chirp functions in the two-dimensional
case is given in Section \ref{sec3}.

In Section
\ref{sec4}, we present an image encryption method of double
phase coding based on fractional Riesz potentials related to chirp functions, which
mainly changes the amplitude in the fractional Fourier
domain. Compared with the image encryption method of double
phase coding based on the FRFT, the symbol of fractional
Riesz potentials related to chirp functions in the image encryption method of double phase
coding based on fractional Riesz potentials related to chirp functions essentially provides more
degrees of freedom and greatly improves the security of
information.

A conclusion is given in Section \ref{sec5}.

 The  fractional pseudo-differential operator and the  fractional generalized Sobolev space and other function spaces related to chirp functions as well as their applications will be presented in a forthcoming article.

Finally, we make some conventions on notation.
Let  $\nn:=\{1,2,\ldots\}$    and $\zz_+:=\{0\}\cup\nn$.
For any given multi-index
$\boldsymbol{\zeta}:=(\zeta_1,\ldots,\zeta_n)\in\zz_+^n:=(\zz_+)^n$,
let
 $$|\boldsymbol{\zeta}|:=\zeta_1+\cdots+\zeta_n\ \mathrm{and}\ \partial^{\boldsymbol{\zeta}}
:=\left(\frac{\partial}{\partial x_1}\right)^{\zeta_1}\cdots\left(\frac{\partial}{\partial x_n}\right)^{\zeta_n}.$$
We use $C$ to denote a positive constant
which is independent of the main parameters involved, whose value may vary from line to line.
The \emph{symbol} $g\ls h$ means $g\le Ch$.
  For a complex number $z$ with ${\rm Re}\ z > 0$, let the \emph{gamma function}
$$\Gamma(z):=\int_0^\infty t^{z-1}e^{-t}\,dt.$$
  For any $p\in(0,\infty)$, the \emph{Lebesgue space} $L^p(\mathbb{R}^n)$ is defined to be the set of all the measurable functions $f$ on $\mathbb{R}^n$ such that $$\|f\|_{L^p(\mathbb{R}^n)}:=\left[\int_{\mathbb{R}^n}|f(x)|^p\,dx\right]^{1/p}<\infty.$$
Moreover, when we prove a theorem or the like, we always use the same
symbols in the wanted proved theorem or the like.

\section{Fractional Riesz potentials related to chirp functions}\label{sec2}

Recall that,  for any  $f\in \mathscr{S}(\mathbb{R})$, the \emph{Hilbert transform} $H(f)$ of $f$,  is defined by setting, for any $x\in \mathbb{R}$,
$$H(f)(x):=\frac{1}{\pi}\ {\rm p}.{\rm v}.\int_\mathbb{R}
\frac{f(y)}{x-y}\,dy,$$
which is the prototype of  Calder\'on--Zygmund    operators and plays an irreplaceable role in harmonic analysis. The Hilbert
transform is also a multiplier operator, that is, for any  $f\in \mathscr{S}(\mathbb{R})$ and $x\in \mathbb{R}$,
\begin{equation}\label{eq-H}
\mathcal{F}(Hf)(x)=-i \sgn(x)\mathcal{F}(f)(x).
\end{equation}
It can be seen from (\ref{eq-H}) that the Hilbert transform
 is a phase-shift converter that multiplies the positive
 frequency portion of the original signal by $-i$; in other
 words, it maintains  the same amplitude and shifts the phase
 by $-\pi/2$, while the negative frequency portion is shifted
 by $\pi/2$. Note that the Riesz transform is a natural
generalization of the Hilbert transform in the $n$-dimensional
case and is also a Calder\'on--Zygmund    operator, with properties
analogous to those of the Hilbert transform on
$\mathbb{R}$. For any   $j\in \{1,\ldots, n\}$ and $f\in \mathscr{S}(\mathbb{R}^n)$ , the \emph{Riesz transform} $R_j(f)$   of $f$ is defined by setting,  for any $\boldsymbol{x}\in \mathbb{R}^n$,
\begin{equation*}
R_j(f)(\boldsymbol{x}):=c_n\ {\rm p}.{\rm v}.\int_{\mathbb{R}^n}
\frac{x_j-y_j}{|\boldsymbol{x}-\boldsymbol{y}|^{n+1}}
f(\boldsymbol{y})\,d\boldsymbol{y},
\end{equation*}
where $c_n:={\Gamma(\frac{n+1}{2})}/{\pi^{\frac{n+1}{2}}}$.
The Riesz transform is also a multiplier operator, that is, for any   $j\in \{1,\ldots, n\}$, $f\in \mathscr{S}(\mathbb{R}^n)$, and $\boldsymbol{x}:=(x_1,\ldots,x_n)\in \mathbb{R}^n$,
$$\mathcal{F}(R_jf)(\boldsymbol{x})=-\frac{ix_j}
{|\boldsymbol{x}|}\mathcal{F}(f)(\boldsymbol{x});$$
therefore, the multiplier of the Riesz transform is $-ix_j/|\boldsymbol{x}|$,
and hence the Riesz transform is not only a phase-shift converter,
but also an amplitude attenuator.

In \cite{z1998}, Zayed originally introduced the fractional Hilbert
transform related to chirp functions which has been widely used in signal processing
(see, for instance, \cite{tlw2010,tlw2008,vs2014}). In \cite{cfgw2021},
Chen et al. regarded the
fractional Hilbert transform as the fractional Fourier multiplier
operator. Due to the development and the wide application of the
multidimensional FRFT,  motivated
by the relations among the Fourier transform, the Riesz
transform, the multidimensional  FRFT, and the fractional
Hilbert transform, Fu et al. \cite{fglwy2021} introduced the following fractional Riesz
transform related to chirp functions.

\begin{definition}\label{def-frt} For any $j\in\{1,\ldots, n\}$  and
$\boldsymbol{\alpha}:=(\alpha_1, \ldots,
\alpha_n)\in  \mathbb{R}^n$ with
$\alpha_k\notin \pi \mathbb{Z}$ for any $k\in\{1,\ldots,n\}$, the
\emph{$j$th fractional Riesz transform related to chirp functions}, $R_j^{\boldsymbol{\alpha}}(f)$, of
$f\in \mathscr{S}(\mathbb{R}^n)$ is defined  by setting, for any
$\boldsymbol{x}\in \mathbb{R}^n$,
\begin{align*}
R_j^{\boldsymbol{\alpha}}(f)(\boldsymbol{x}):=c_n\
{\rm p}.{\rm v}.\ e_{-\boldsymbol{\alpha}}(\boldsymbol{x})
\int_{\mathbb{R}^n}\frac{x_j-y_j}{|\boldsymbol{x}
-\boldsymbol{y}|^{n+1}}f(\boldsymbol{y})e_{ \boldsymbol{\alpha}}
(\boldsymbol{y})\,d\boldsymbol{y},
\end{align*}
where $c_n:=\Gamma(\frac{n+1}{2})/\pi^{\frac{n+1}{2}}$ and
     $e_{\boldsymbol{\alpha}} $ is the same as in Remark  \ref{rem-fft-ft}.
\end{definition}

The fractional Riesz transform related to chirp functions, $R_j^{\boldsymbol{\alpha}}$, is
also a fractional multiplier operator. That is, for any  $j\in\{1,\ldots, n\}$, $\boldsymbol{\alpha}:=(\alpha_1, \ldots,
\alpha_n)\in  \mathbb{R}^n$ with
$\alpha_k\notin \pi \mathbb{Z}$ for any $k\in\{1,\ldots,n\}$,
 $f\in \mathscr{S}(\mathbb{R}^n)$, and  $\boldsymbol{u}:=(u_1 ,\ldots, u_n)\in
\mathbb{R}^n$,  one has
\begin{align}\label{eq-R}
\mathcal{F}_{\boldsymbol{\alpha}}\left(R_j^{\boldsymbol{\alpha}}
f\right)(\boldsymbol{u})
=-i\frac{\widetilde{u}_j}{|\widetilde{\boldsymbol{u}}|}
\mathcal{F}_{\boldsymbol{\alpha}}\left(f\right)(\boldsymbol{u}),
\end{align}
where
   $\widetilde{\boldsymbol{u}}:=(\widetilde{u}_1,\ldots,\widetilde{u}
_n)=(u_1\csc(\alpha_1),\ldots, u_n\csc(\alpha_n))$.

In \cite{fglwy2021}, Fu et al. also gave the application of the
fractional Riesz transform related to chirp functions in the edge detection. Compared
with the classical Riesz transform, the fractional Riesz
transform related to chirp functions can detect the information in any direction by
adjusting its order.

Recall that, for any $\beta\in(0,n)$
and $f \in \mathscr{S}(\mathbb{R}^n)$, the \emph{Riesz potential}
is  defined by setting, for any
$\boldsymbol{x}\in \mathbb{R}^n$,
\begin{equation}\label{eq-beta}
	I_\beta(f)(\boldsymbol{x}):=\frac{1}{\gamma(\beta)}
\int_{\mathbb{R}^n}\frac{f(\boldsymbol{y})}{|\boldsymbol{x}
-\boldsymbol{y}|^{n-\beta}}\,d\boldsymbol{y},
\end{equation}
with $\gamma(\beta):=\pi ^{\frac{n}{2}}2^\beta
\Gamma(\frac{\beta}{2})/\Gamma(\frac{n-\beta}{2})$.
The Riesz potential, also known as the fractional integral
operator, is not only a crucial  integral operator in
Fourier analysis, but also plays a very significant role
in fractional differential equations. In this article, in order to avoid  the
confusion with the fractional   operator,
we always  call it the Riesz potential. It is well known that,
for any   $\beta\in(0,n)$,
  $f \in \mathscr{S}(\mathbb{R}^n)$,   and $\boldsymbol{x}\in \mathbb{R}^n$, by the property
of the Fourier transform, one has
\begin{align}\label{eq-RP}
\mathcal{F}(I_\beta f)(\boldsymbol{x})=(2\pi)^{-\beta}
|\boldsymbol{x}|^{-\beta}\mathcal{F}(f)(\boldsymbol{x})
\end{align}
in  $\mathscr{S}'(\mathbb{R}^n)$.

Inspired by the relation between the Riesz potential and the
Fourier transform, as well as the definitions of both the multidimensional  FRFT and
the fractional Riesz transform related to chirp functions, we introduce  a   new fractional Riesz potential related to chirp functions as follows.

\begin{definition}{\label{def-frp}}
If $\beta\in(0,n)$ and
$\boldsymbol{\alpha}:=(\alpha_1, \ldots,
\alpha_n)\in  \mathbb{R}^n$ with
$\alpha_k\notin \pi \mathbb{Z}$ for any $k\in\{1,\ldots,n\}$,
the \emph{fractional Riesz potential related to chirp functions}, $I^{\boldsymbol{\alpha}}_\beta $,
is defined by setting, for any $f\in \mathscr{S}(\mathbb{R}^n)$  and
$\boldsymbol{x}\in \mathbb{R}^n$,
$$I^{\boldsymbol{\alpha}}_\beta(f)(\boldsymbol{x}):=
\frac{1}{\gamma(\beta)}e_{-\boldsymbol{\alpha}}(\boldsymbol{x})
\int_{\mathbb{R}^n}\frac{f(\boldsymbol{y})}{|\boldsymbol{x}
-\boldsymbol{y}|^{n-\beta}}e_{\boldsymbol{\alpha}}
(\boldsymbol{y})\,d\boldsymbol{y},$$
where $e_{\boldsymbol{\alpha}}$ is the same as in Remark  \ref{rem-fft-ft} and $\gamma(\beta)$ is the same as in    \eqref{eq-beta}.
\end{definition}

\begin{remark}
When $\boldsymbol{\alpha}=
(\frac{\pi}{2}+k_1\pi,\ldots,
\frac{\pi}{2}+k_n\pi)$ with $\{k_j\}_{j=1}^n\subset \mathbb{Z} $,
the fractional Riesz potential related to chirp functions goes back to  the
classical Riesz potential.
\end{remark}

The Riesz potential is closely related to the Laplace operator, the Fourier
transform, and the Riesz transform. Based on this, in
  Subsection \ref{sec2.1}, we establish the relations among the fractional Riesz potential,
the multidimensional  FRFT, the fractional Riesz transform related to chirp functions,   and the Laplace operator related to
chirp functions. In Subsection \ref{sec2.2}, we obtain the
boundedness in the rotation invariant space of the fractional
Riesz potential related to chirp functions.

\subsection{Relations among fractional Fourier transforms,
fractional Riesz potentials related to chirp functions, fractional Laplace operators related to
chirp functions,  and fractional Riesz transforms related to chirp functions}\label{sec2.1}

First, we establish the relation between the multidimensional  FRFT and the fractional Riesz
potential related to chirp functions.

\begin{definition}{\rm (see \cite{g12014})}\label{def-fts}  	
   The \emph{Fourier transform}
$\widehat{u}$ of any tempered distribution $u$ is defined
by  setting, for any $f \in \mathscr{S}(\mathbb{R}^n)$,
$\langle\widehat{u},f\rangle:=\langle u,\widehat{f}\rangle.$
\end{definition}

\begin{definition} \label{def-ffts}  	
Let   $\boldsymbol{\alpha}:=(\alpha_1 ,\ldots, \alpha_n )\in
\mathbb{R}^n$ with
$\alpha_k \notin \pi \mathbb{Z}$ for any $k\in \{1,\ldots,n \}$.  The \emph{multidimensional}  FRFT
$\widehat{u}$ of any tempered distribution $u$ is defined
by  setting, for any $f \in \mathscr{S}(\mathbb{R}^n)$,
$\langle\mathcal{F}_{\boldsymbol{\alpha}}u,f\rangle:=\langle u, \mathcal{F}_{\boldsymbol{\alpha}}f\rangle.$
\end{definition}

 Let   $\boldsymbol{\alpha}:=(\alpha_1 ,\ldots, \alpha_n )\in
\mathbb{R}^n$ with
$\alpha_k \notin \pi \mathbb{Z}$ for any $k\in \{1,\ldots,n \}$. Using Definition \ref{def-fts}, Definition \ref{def-ffts},   (\ref{eq-1.1}), and properties
of the Fourier transform, we can easily conclude that, for any     $f \in \mathscr{S}(\mathbb{R}^n)$ and $\boldsymbol{u}\in \mathbb{R}^n$, (\ref{eq-1.1})   also holds true.

\begin{definition} {\rm (see \cite{g12014})}
Let $f,g \in L^1(\mathbb{R}^n)$.   The \emph{convolution}
$f*g$ is defined by  setting, for any $\boldsymbol{x}\in \mathbb{R}^n$,
$$(f*g)(\boldsymbol{x}):=\int_{\mathbb{R}^n}f(\boldsymbol{y})
g(\boldsymbol{x}-\boldsymbol{y})\,dy.$$
\end{definition}

 \begin{lemma}\label{lem-2-1}{\rm (see \cite{g12014})}
For any  $u\in\mathscr{S}'(\mathbb{R}^n)$ and $f \in
\mathscr{S}(\mathbb{R}^n)$,
$\mathcal{F}(f*u)=\widehat{f}\ \widehat{u}.$
\end{lemma}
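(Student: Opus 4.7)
The plan is to prove the identity purely by duality, reducing everything to the corresponding statement for pairs of Schwartz functions. First, I would verify the classical convolution theorem on $\mathscr{S}(\mathbb{R}^n)\times\mathscr{S}(\mathbb{R}^n)$: for $f,g\in\mathscr{S}(\mathbb{R}^n)$, Fubini applied to the absolutely convergent double integral representing $\mathcal{F}(f*g)(\boldsymbol{\xi})$, together with the translation substitution $\boldsymbol{x}\mapsto\boldsymbol{x}+\boldsymbol{y}$, immediately yields $\mathcal{F}(f*g)=\widehat{f}\,\widehat{g}$. Applying this to the pair $\widehat{f},\widehat{\varphi}$ and invoking the Fourier inversion identity $\mathcal{F}(\widehat{h})(\boldsymbol{x})=h(-\boldsymbol{x})$ produces the multiplication–convolution duality $\mathcal{F}(\widehat{f}\varphi)=\widetilde{f}*\widehat{\varphi}$ for $f,\varphi\in\mathscr{S}(\mathbb{R}^n)$, where $\widetilde{f}(\boldsymbol{x}):=f(-\boldsymbol{x})$.

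Second, I would recall the standard definition of the convolution of a Schwartz function with a tempered distribution: for $f\in\mathscr{S}(\mathbb{R}^n)$ and $u\in\mathscr{S}'(\mathbb{R}^n)$, set $\langle f*u,\varphi\rangle:=\langle u,\widetilde{f}*\varphi\rangle$ for every $\varphi\in\mathscr{S}(\mathbb{R}^n)$; this pairing is well defined because $\widetilde{f}*\varphi\in\mathscr{S}(\mathbb{R}^n)$ and depends continuously on $\varphi$ in the Schwartz topology, so $f*u\in\mathscr{S}'(\mathbb{R}^n)$. Chaining the two ingredients above through Definition \ref{def-fts}, for every $\varphi\in\mathscr{S}(\mathbb{R}^n)$ I would compute
\begin{align*}
\langle\mathcal{F}(f*u),\varphi\rangle
&=\langle f*u,\widehat{\varphi}\rangle
=\langle u,\widetilde{f}*\widehat{\varphi}\rangle\\
&=\langle u,\mathcal{F}(\widehat{f}\varphi)\rangle
=\langle\widehat{u},\widehat{f}\varphi\rangle
=\langle\widehat{f}\,\widehat{u},\varphi\rangle,
\end{align*}
from which the identity $\mathcal{F}(f*u)=\widehat{f}\,\widehat{u}$ in $\mathscr{S}'(\mathbb{R}^n)$ follows, since $\varphi$ is arbitrary.

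The main obstacle is not depth-of-idea but bookkeeping: I must ensure that the convolution $f*u$ as intended by the paper coincides with the duality-based definition I adopted, and that the Fourier normalization $e^{-2\pi i\boldsymbol{x}\cdot\boldsymbol{\xi}}$ fixed in Definition \ref{def-ft} produces no stray constants in either the classical convolution theorem $\mathcal{F}(f*g)=\widehat{f}\,\widehat{g}$ or in the reflection identity $\mathcal{F}(\widehat{f})=\widetilde{f}$. Once these normalization checks are cleared and the sense of the product $\widehat{f}\,\widehat{u}$ (a Schwartz function times a tempered distribution) is understood via $\langle\widehat{f}\,\widehat{u},\varphi\rangle=\langle\widehat{u},\widehat{f}\varphi\rangle$, the lemma reduces to the single duality chain displayed above.
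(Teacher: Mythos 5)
Your duality argument is correct: the chain $\langle\mathcal{F}(f*u),\varphi\rangle=\langle f*u,\widehat{\varphi}\rangle=\langle u,\widetilde{f}*\widehat{\varphi}\rangle=\langle u,\mathcal{F}(\widehat{f}\varphi)\rangle=\langle\widehat{u},\widehat{f}\varphi\rangle=\langle\widehat{f}\,\widehat{u},\varphi\rangle$ is valid, and the key intermediate identity $\mathcal{F}(\widehat{f}\varphi)=\widetilde{f}*\widehat{\varphi}$ checks out under the normalization $e^{-2\pi i\boldsymbol{x}\cdot\boldsymbol{\xi}}$ of Definition \ref{def-ft}. The paper itself gives no proof of this lemma --- it is quoted verbatim from the reference \cite{g12014} --- and your argument is essentially the standard one found there, so there is nothing to compare beyond noting that you have correctly reconstructed it, including the right distributional definitions of $f*u$ and of the product $\widehat{f}\,\widehat{u}$.
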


\begin{lemma}\label{lem-2-2}{\rm (see \cite{s1993})}
For any $\beta\in(0,n)$ and $\xi\in \mathbb{R}^n$, the identity $$\mathcal{F}\left(\frac{1}{|\boldsymbol{x}
|^{n-\beta}}\right)(\boldsymbol{\xi})=\gamma(\beta)(2\pi)^{-\beta}
|\boldsymbol{\xi}|^{-\beta}$$ holds true in the distribution sense, that is, for any $\varphi \in \mathscr{S}(\mathbb{R}^n)$,
$$\int_{\mathbb{R}^n}\left(\frac{1}{|\boldsymbol{x}|^{n-\beta}}
\right)\widehat{\varphi}(\boldsymbol{x})\,d\boldsymbol{x}=
\gamma(\beta)\int_{\mathbb{R}^n}(2\pi)^{-\beta}
|\boldsymbol{x}|^{-\beta}{\varphi}(\boldsymbol{x})
\,d\boldsymbol{x},$$
where   $\gamma(\beta)$ is the same as in   \eqref{eq-beta}.
\end{lemma}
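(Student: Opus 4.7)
The plan is to reduce the identity to Gaussian integrals via a Gamma-function subordination formula. Starting from $\Gamma(a)=\int_0^\infty t^{a-1}e^{-t}\,dt$ and substituting $t=\pi s|\boldsymbol{x}|^2$, one obtains, for any $\boldsymbol{x}\in\mathbb{R}^n\setminus\{\boldsymbol{0}\}$ and any $a>0$,
\begin{equation*}
|\boldsymbol{x}|^{-2a} = \frac{\pi^a}{\Gamma(a)}\int_0^\infty s^{a-1}e^{-\pi s|\boldsymbol{x}|^2}\,ds.
\end{equation*}
Specializing to $a:=(n-\beta)/2$ expresses the Riesz kernel $|\boldsymbol{x}|^{-(n-\beta)}$ as a continuous superposition of Gaussians, which is the bridge between the space side and the frequency side of the asserted identity.

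For any $\varphi\in\mathscr{S}(\mathbb{R}^n)$, the next step is to insert this representation into $\int_{\mathbb{R}^n}|\boldsymbol{x}|^{-(n-\beta)}\widehat{\varphi}(\boldsymbol{x})\,d\boldsymbol{x}$ and swap the order of integration in $s$ and $\boldsymbol{x}$. The inner integral $\int_{\mathbb{R}^n}e^{-\pi s|\boldsymbol{x}|^2}\widehat{\varphi}(\boldsymbol{x})\,d\boldsymbol{x}$ is then transferred, by the duality $\int f\,\widehat{g}=\int \widehat{f}\,g$ together with the self-reciprocity $\mathcal{F}(e^{-\pi s|\cdot|^2})(\boldsymbol{\xi})=s^{-n/2}e^{-\pi|\boldsymbol{\xi}|^2/s}$ of Gaussians, to $\int_{\mathbb{R}^n}s^{-n/2}e^{-\pi|\boldsymbol{\xi}|^2/s}\varphi(\boldsymbol{\xi})\,d\boldsymbol{\xi}$. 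A second Fubini exchange together with the substitution $u:=1/s$ reduces the remaining $s$-integral to
\begin{equation*}
\int_0^\infty s^{-\beta/2-1}e^{-\pi|\boldsymbol{\xi}|^2/s}\,ds = \int_0^\infty u^{\beta/2-1}e^{-\pi|\boldsymbol{\xi}|^2 u}\,du = \Gamma(\beta/2)(\pi|\boldsymbol{\xi}|^2)^{-\beta/2}.
\end{equation*}
Gathering the prefactors $\pi^{(n-\beta)/2}/\Gamma((n-\beta)/2)$ and $\pi^{-\beta/2}\Gamma(\beta/2)$ yields the constant $\pi^{(n-2\beta)/2}\Gamma(\beta/2)/\Gamma((n-\beta)/2)$, which by the definition of $\gamma(\beta)$ in \eqref{eq-beta} is exactly $\gamma(\beta)(2\pi)^{-\beta}$.

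The main obstacle will be rigorously justifying the two Fubini exchanges, since neither iterated integrand is absolutely integrable on $(0,\infty)\times\mathbb{R}^n$ a priori. The plan is to split the $s$-integral at $s=1$: on $(0,1]$ the inner $\boldsymbol{x}$-integral is bounded by $\|\widehat{\varphi}\|_{L^1(\mathbb{R}^n)}$ and the outer factor $s^{(n-\beta)/2-1}$ is integrable near $0$ precisely because $\beta<n$; on $[1,\infty)$ one estimates the inner integral by $\|\widehat{\varphi}\|_{L^\infty(\mathbb{R}^n)}\,s^{-n/2}$, producing the tail $s^{-\beta/2-1}$, which is integrable at infinity precisely because $\beta>0$. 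After the substitution $u=1/s$, the second swap reduces to exactly the same structure with $\varphi$ in place of $\widehat{\varphi}$, so an identical splitting works there as well. Once Fubini is in hand, the rest is the algebraic bookkeeping described above.
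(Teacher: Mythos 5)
Your proof is correct: the subordination formula, the Gaussian self-reciprocity under the $e^{-2\pi i\boldsymbol{x}\cdot\boldsymbol{\xi}}$ convention, the Fubini justifications (each splitting using exactly one of the hypotheses $\beta>0$ and $\beta<n$), and the final constant $\pi^{n/2-\beta}\Gamma(\beta/2)/\Gamma(\frac{n-\beta}{2})=\gamma(\beta)(2\pi)^{-\beta}$ all check out. The paper does not prove this lemma but only cites \cite{s1993}, and your argument is essentially the standard heat-kernel/Gamma-function proof found there, so there is nothing to compare beyond noting agreement.
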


\begin{theorem}\label{thm-frp}
For any $\beta\in(0,n)$, $\boldsymbol{\alpha}:=(\alpha_1 ,\ldots, \alpha_n )\in
\mathbb{R}^n$ with
$\alpha_k \notin \pi \mathbb{Z}$ for any $k\in \{1,\ldots,n \}$,    $f\in \mathscr{S}(\mathbb{R}^n)$,   and    $\boldsymbol{u}:=(u_1,\ldots,u_n)\in
\mathbb{R}^n$,
$$\mathcal{F}_{\boldsymbol{\alpha}}\left(I_\beta^{\boldsymbol{\alpha}}
f\right)(\boldsymbol{u})=(2\pi)^{-\beta}
|\widetilde{\boldsymbol{u}}|^{-\beta}\mathcal{F}_{\boldsymbol{\alpha}}
\left( f\right)(\boldsymbol{u})\ in\ \mathscr{S}'(\mathbb{R}^n),$$
where
 $\widetilde{\boldsymbol{u}}:=(\widetilde{u}_1,\ldots,\widetilde{u}
_n )=(u_1\csc(\alpha_1),\ldots,u_n\csc(\alpha_n)) $.  The
$(2\pi)^{-\beta}|\widetilde{\boldsymbol{u}}|^{-\beta}$ is called the   \emph{symbol}  of the fractional Riesz potential related to chirp functions.
\end{theorem}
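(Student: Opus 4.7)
The plan is to reduce the identity to the classical Fourier transform version \eqref{eq-RP} via the bridge formula \eqref{eq-1.1} together with the key observation that the chirp factor can be ``pushed through'' the fractional Riesz potential related to chirp functions. Specifically, since $e_{\boldsymbol{\alpha}}(\boldsymbol{x})e_{-\boldsymbol{\alpha}}(\boldsymbol{x})=1$, the definition of $I_\beta^{\boldsymbol{\alpha}}$ in Definition \ref{def-frp} gives, for every $f\in\mathscr{S}(\mathbb{R}^n)$ and $\boldsymbol{x}\in\mathbb{R}^n$,
\begin{equation*}
e_{\boldsymbol{\alpha}}(\boldsymbol{x})\,I_\beta^{\boldsymbol{\alpha}}(f)(\boldsymbol{x})
=\frac{1}{\gamma(\beta)}\int_{\mathbb{R}^n}\frac{(e_{\boldsymbol{\alpha}}f)(\boldsymbol{y})}{|\boldsymbol{x}-\boldsymbol{y}|^{n-\beta}}\,d\boldsymbol{y}
=I_\beta(e_{\boldsymbol{\alpha}}f)(\boldsymbol{x}),
\end{equation*}
so $I_\beta^{\boldsymbol{\alpha}}$ is just the classical Riesz potential conjugated by multiplication by the chirp $e_{\boldsymbol{\alpha}}$.

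Next I would apply the bridge identity \eqref{eq-1.1} (which, by Definition \ref{def-fts} and Definition \ref{def-ffts}, extends to tempered distributions) to the left-hand side, obtaining
\begin{equation*}
\mathcal{F}_{\boldsymbol{\alpha}}\bigl(I_\beta^{\boldsymbol{\alpha}}f\bigr)(\boldsymbol{u})
=c(\boldsymbol{\alpha})\,e_{\boldsymbol{\alpha}}(\boldsymbol{u})\,
\mathcal{F}\bigl(e_{\boldsymbol{\alpha}}\,I_\beta^{\boldsymbol{\alpha}}f\bigr)(\widetilde{\boldsymbol{u}})
=c(\boldsymbol{\alpha})\,e_{\boldsymbol{\alpha}}(\boldsymbol{u})\,
\mathcal{F}\bigl(I_\beta(e_{\boldsymbol{\alpha}}f)\bigr)(\widetilde{\boldsymbol{u}})
\end{equation*}
in $\mathscr{S}'(\mathbb{R}^n)$. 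Then I would use the classical identity \eqref{eq-RP}, whose rigorous distributional form follows from writing $I_\beta g=\frac{1}{\gamma(\beta)}|\cdot|^{\beta-n}\ast g$ and combining Lemma \ref{lem-2-1} with Lemma \ref{lem-2-2}, to rewrite $\mathcal{F}(I_\beta(e_{\boldsymbol{\alpha}}f))(\widetilde{\boldsymbol{u}})=(2\pi)^{-\beta}|\widetilde{\boldsymbol{u}}|^{-\beta}\mathcal{F}(e_{\boldsymbol{\alpha}}f)(\widetilde{\boldsymbol{u}})$. Pulling the scalar factor $(2\pi)^{-\beta}|\widetilde{\boldsymbol{u}}|^{-\beta}$ out and then recognizing the remaining product $c(\boldsymbol{\alpha})e_{\boldsymbol{\alpha}}(\boldsymbol{u})\mathcal{F}(e_{\boldsymbol{\alpha}}f)(\widetilde{\boldsymbol{u}})$ as $\mathcal{F}_{\boldsymbol{\alpha}}(f)(\boldsymbol{u})$ by another application of \eqref{eq-1.1} delivers the claimed identity.

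The main subtlety, rather than any computation, is the distributional interpretation: $|\widetilde{\boldsymbol{u}}|^{-\beta}$ is locally integrable at the origin only for $\beta\in(0,n)$, so the hypothesis is used precisely to make Lemma \ref{lem-2-2} applicable and to guarantee that the product $|\widetilde{\boldsymbol{u}}|^{-\beta}\mathcal{F}_{\boldsymbol{\alpha}}(f)(\boldsymbol{u})$ is a well-defined tempered distribution. One should check, when passing to the distributional pairing, that multiplication by the smooth bounded chirp $e_{\boldsymbol{\alpha}}$ and the linear change of variables $\boldsymbol{u}\mapsto\widetilde{\boldsymbol{u}}$ are continuous operations on $\mathscr{S}(\mathbb{R}^n)$ (with invertible Jacobian $\prod_k\csc(\alpha_k)$, which is finite since each $\alpha_k\notin\pi\mathbb{Z}$), so all of the above manipulations are legitimate in $\mathscr{S}'(\mathbb{R}^n)$. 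Once this bookkeeping is written out, the proof is essentially a three-line chain of equalities between \eqref{eq-1.1} and \eqref{eq-RP}.
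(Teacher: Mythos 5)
Your proposal is correct and follows essentially the same route as the paper's own proof: both reduce the claim via \eqref{eq-1.1} to the identity $e_{\boldsymbol{\alpha}}I_\beta^{\boldsymbol{\alpha}}f=\frac{1}{\gamma(\beta)}(e_{\boldsymbol{\alpha}}f)*|\cdot|^{\beta-n}$ and then invoke Lemma \ref{lem-2-1} together with Lemma \ref{lem-2-2} (equivalently, the classical relation \eqref{eq-RP}) before reassembling $\mathcal{F}_{\boldsymbol{\alpha}}(f)(\boldsymbol{u})$ via \eqref{eq-1.1} again. Your extra remarks on local integrability of $|\widetilde{\boldsymbol{u}}|^{-\beta}$ and the legitimacy of the distributional manipulations are sound bookkeeping that the paper leaves implicit.
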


\begin{proof}
Fix an $f\in \mathscr{S}(\mathbb{R}^n)$. From   \eqref{eq-1.1} and Definition \ref{def-frp}, we infer that, for any $\boldsymbol{u} \in
\mathbb{R}^n$,
\begin{align*}
\mathcal{F}_{\boldsymbol{\alpha}}\left(I_\beta^{\boldsymbol{\alpha}}
f\right)(\boldsymbol{u})=&\ c(\boldsymbol{\alpha} )
e_{\boldsymbol{\alpha}}(\boldsymbol{u})\mathcal{F}
(e_{\boldsymbol{\alpha}} I_\beta^{\boldsymbol{\alpha}}f)
(\widetilde{\boldsymbol{u}}) \\
=&\  \frac{1}{\gamma(\beta)}c(\boldsymbol{\alpha} )
e_{\boldsymbol{\alpha}}(\boldsymbol{u})\mathcal{F}
\left(e_{\boldsymbol{\alpha}}f*\left(\frac{1}{|\boldsymbol{\cdot}|
^{n-\beta}}\right)\right )(\widetilde{\boldsymbol{u}}).
\end{align*}
By Lemma \ref{lem-2-1} and Lemma \ref{lem-2-2}, we obtain,  for any $\boldsymbol{u} \in
\mathbb{R}^n$,
\begin{align*}
\mathcal{F}_{\boldsymbol{\alpha}}\left(I_\beta^{\boldsymbol{\alpha}}
f\right)(\boldsymbol{u})=&\
\frac{1}{\gamma(\beta)}c(\boldsymbol{\alpha} )e_{\boldsymbol{\alpha}}
(\boldsymbol{u})\mathcal{F}(e_{\boldsymbol{\alpha}}f)
(\widetilde{\boldsymbol{u}})
\mathcal{F} \left(\frac{1}{|\boldsymbol{\cdot}|^{n-\beta}}\right)
(\widetilde{\boldsymbol{u}})\\
=&\ \frac{1}{\gamma(\beta)}c(\boldsymbol{\alpha} )
e_{\boldsymbol{\alpha}}(\boldsymbol{u})\mathcal{F}
(e_{\boldsymbol{\alpha}}f)(\widetilde{\boldsymbol{u}})
\gamma(\beta)(2\pi)^{-\beta}
|\widetilde{\boldsymbol{u}}|^{-\beta} \\
=&\ (2\pi)^{-\beta}|\widetilde{\boldsymbol{u}}|^{-\beta}\mathcal{F}
_{\boldsymbol{\alpha}}\left( f\right)(\boldsymbol{u}),
\end{align*}
which completes the proof of   Theorem \ref{thm-frp}.
\end{proof}

\begin{remark}
According to Theorem \ref{thm-frp}, we can easily observe
that the fractional Riesz potential related to chirp functions has the semigroup
property, that is, for any $\boldsymbol{\alpha}:=(\alpha_1, \ldots,
\alpha_n)\in  \mathbb{R}^n$ with
$\alpha_k \notin \pi \mathbb{Z}$ for any $k\in\{1,\ldots,n\}$, $\beta_{1},\beta_{2}\in(0,n)$, and $\beta_{1}+\beta_{2}\in(0,n)$,
$I_{\beta_1}^{\boldsymbol{\alpha}}I_{\beta_2}^{\boldsymbol
{\alpha}}=I_{\beta_{1}+\beta_{2}}^{\boldsymbol{\alpha}}.$
\end{remark}

\begin{lemma}\label{lem-ifrft}{\rm(FRFT inversion theorem)
(see \cite{kr2020})}
For any $\boldsymbol{\alpha}:=(\alpha_1, \ldots,
\alpha_n)\in  \mathbb{R}^n$, $f\in \mathscr{S}(\mathbb{R}^n)$,  and   $\boldsymbol{x}\in\mathbb{R}^n$,
$$f(\boldsymbol{x})=\int_{\mathbb{R}^n}\mathcal{F}_
{\boldsymbol{\alpha}}(f)(\boldsymbol{u})K_{-\boldsymbol
{\alpha}}(\boldsymbol{u},\boldsymbol{x})\,d\boldsymbol{u},$$
where $K_{-\boldsymbol{\alpha}}$ is the same as in Definition \ref{def-fft} with $\boldsymbol{\alpha}$ replace by $-\boldsymbol{\alpha}$.
\end{lemma}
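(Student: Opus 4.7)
My plan is to reduce to the classical Fourier inversion theorem via the key identity \eqref{eq-1.1}. Since $K_{\boldsymbol{\alpha}}(\boldsymbol{x},\boldsymbol{u})=\prod_{k=1}^n K_{\alpha_k}(x_k,u_k)$ has product structure and the multidimensional FRFT therefore factors through one-dimensional FRFTs in each coordinate, a Fubini argument lets me reduce the full statement to the one-dimensional inversion formula applied iteratively. So I would focus on proving the one-dimensional case
$$f(x)=\int_{\mathbb{R}}\mathcal{F}_{\alpha}(f)(u)\,K_{-\alpha}(u,x)\,du$$
for $f\in\mathscr{S}(\mathbb{R})$ and then assemble the $n$-dimensional statement.

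For the generic case $\alpha\notin\pi\mathbb{Z}$, I would start from the one-dimensional analogue of \eqref{eq-1.1}, namely
$$\mathcal{F}_{\alpha}(f)(u)=c(\alpha)\,e_{\alpha}(u)\,\mathcal{F}(e_{\alpha}f)(u\csc\alpha),$$
and solve it for $\mathcal{F}(e_{\alpha}f)$: that is, set $\xi:=u\csc\alpha$ and read off
$$\mathcal{F}(e_{\alpha}f)(\xi)=\frac{1}{c(\alpha)}\,e_{-\alpha}(\xi\sin\alpha)\,\mathcal{F}_{\alpha}(f)(\xi\sin\alpha).$$
Next I would apply the classical Fourier inversion theorem to the Schwartz function $e_{\alpha}f$, write
$$e_{\alpha}(x)f(x)=\int_{\mathbb{R}}\mathcal{F}(e_{\alpha}f)(\xi)\,e^{2\pi ix\xi}\,d\xi,$$
and then change variables $\xi=u\csc\alpha$ to pull the integration back to the $u$-variable, picking up the Jacobian $|\csc\alpha|$. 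Multiplying both sides by $e_{-\alpha}(x)$ produces an integral representation of $f(x)$.

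The remaining task is to recognise the resulting integrand as $\mathcal{F}_{\alpha}(f)(u)\,K_{-\alpha}(u,x)$. Using $a(-\alpha)=-a(\alpha)$ and $b(-\alpha)=b(\alpha)$, a direct expansion shows
$$K_{-\alpha}(u,x)=c(-\alpha)\,e_{-\alpha}(u)\,e_{-\alpha}(x)\,e^{2\pi iux\csc\alpha},$$
so matching coefficients reduces everything to the single identity
$$c(\alpha)\,c(-\alpha)=|\csc\alpha|,$$
which follows from $c(\alpha)c(-\alpha)=\sqrt{(1-i\cot\alpha)(1+i\cot\alpha)}=\sqrt{1+\cot^2\alpha}$ once one checks that the principal branch is compatible (both factors lie in the right half-plane, so the arguments add to a value in $(-\pi,\pi]$). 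This is where I expect the main nuisance: keeping track of the branch of the complex square root defining $c(\alpha)$ together with the sign of the Jacobian, so that the Fourier change of variables combines cleanly with the constant in $K_{-\alpha}$.

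Finally, the degenerate coordinates where $\alpha_k\in\pi\mathbb{Z}$ are handled coordinate-by-coordinate using the Dirac-delta clauses in Definition \ref{def-fft}: if $\alpha_k\in 2\pi\mathbb{Z}$ the $k$-th slice of the FRFT is the identity and $K_{-\alpha_k}(u_k,x_k)=\delta(u_k-x_k)$ inverts it trivially, while if $\alpha_k\in 2\pi\mathbb{Z}+\pi$ the $k$-th slice is a reflection inverted by $\delta(u_k+x_k)$. Combining the generic and degenerate coordinates via Fubini yields the claimed identity in full generality.
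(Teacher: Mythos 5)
Your proof is correct, but note that the paper does not prove this lemma at all: it is quoted verbatim from \cite{kr2020} as a known result, so there is no internal argument to compare against. Your derivation is a sound self-contained substitute. The skeleton --- invert $\mathcal{F}(e_{\alpha}f)$ via classical Fourier inversion using the one-dimensional analogue of \eqref{eq-1.1}, change variables $\xi=u\csc\alpha$, and match the result against $K_{-\alpha}(u,x)=c(-\alpha)e_{-\alpha}(u)e_{-\alpha}(x)e^{2\pi iux\csc\alpha}$ (using $a(-\alpha)=-a(\alpha)$ and $\csc(-\alpha)=-\csc\alpha$) --- is exactly right, and you correctly isolate the one genuinely delicate point, namely $c(\alpha)c(-\alpha)=|\csc\alpha|$. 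For the principal branch this is immediate since $c(-\alpha)=\sqrt{1+i\cot\alpha}=\overline{c(\alpha)}$, so the product is $|c(\alpha)|^{2}=|1-i\cot\alpha|=|\csc\alpha|$, which absorbs the Jacobian $|\csc\alpha|$ from the change of variables. The treatment of the degenerate coordinates via the Dirac clauses is also fine, since $2\pi\mathbb{Z}$ and $2\pi\mathbb{Z}+\pi$ are each closed under negation. One small simplification: since \eqref{eq-1.1} is already stated in $n$ dimensions, you can run the whole computation directly with the $n$-dimensional Fourier inversion theorem and the Jacobian $\prod_{k=1}^{n}|\csc\alpha_k|$, avoiding the coordinate-by-coordinate Fubini assembly except for the coordinates where $\alpha_k\in\pi\mathbb{Z}$.
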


From Definition \ref{def-ffts} and Lemma \ref{lem-ifrft}, it follows that, for any $\boldsymbol{\alpha}:=(\alpha_1, \ldots,
\alpha_n)\in  \mathbb{R}^n$ with
$\alpha_k \notin \pi \mathbb{Z}$ for any $k\in\{1,\ldots,n\}$, and for any $f\in \mathscr{S}'(\mathbb{R}^n)$, $\mathcal{F}_{\boldsymbol{-\alpha}}\mathcal{F}_{\boldsymbol{\alpha}}f=f$. By Theorem \ref{thm-frp} and Lemma \ref{lem-ifrft},  we conclude that,
  for any
$\boldsymbol{\alpha}:=(\alpha_1, \ldots,
\alpha_n)\in  \mathbb{R}^n$ with
$\alpha_k \notin \pi \mathbb{Z}$ for any $k\in\{1,\ldots,n\}$, $\beta\in(0,n)$,  $f\in \mathscr{S}(\mathbb{R}^n)$, and $\boldsymbol{x}\in\mathbb{R}^n$,  the fractional Riesz potential related to chirp functions  can be rewritten as    \[
(I_\beta^{\boldsymbol{\alpha}}f) (\boldsymbol{x}) =
\mathcal{F}_{-\boldsymbol{\alpha}}\left( (2\pi)^{-\beta}
|\widetilde{\boldsymbol{u}}|^{-\beta}
(\mathcal{F}_{\boldsymbol{\alpha}}f)
(\boldsymbol{u}) \right)  (\boldsymbol{x})
\]
in  $\mathscr{S}'(\mathbb{R}^n)$, where $\boldsymbol{u}\in\mathbb{R}^n$  and $\widetilde{\boldsymbol{u}}$ is the same as in Theorem \ref{thm-frp}. In what follows, let $$m_\beta^{\boldsymbol{\alpha}} ({\boldsymbol{u}})
:= (2\pi)^{-\beta} |\widetilde{\boldsymbol{u}}|^{-\beta},$$
which is called  the \emph{symbol} of $I_\beta^{\boldsymbol{\alpha}}$.

It is easy to show that the fractional Riesz potential related to chirp functions, $I_\beta^{\boldsymbol{\alpha}}f$, of any $f\in \mathscr{S}(\mathbb{R}^n)$ can be
decomposed into the composition of the multidimensional  FRFT of order $\boldsymbol{\alpha}$, the symbol $m_\beta^{\boldsymbol{\alpha}}({\boldsymbol{u}})$ of the
fractional Riesz potential  related to chirp functions, and the   multidimensional  FRFT of order $\boldsymbol{-\alpha}$,   as shown in the following Fig.
\ref{FIG1}:

\begin{enumerate}
[(i)]
	
\item multidimensional  FRFT of order $\boldsymbol{\alpha}$, namely $g^{\boldsymbol{\alpha}}
({\boldsymbol{u}}):=(\mathcal{F}_{\boldsymbol{\alpha}} f)
({\boldsymbol{u}})$ for any $\boldsymbol{u} \in
\mathbb{R}^n$;
	
\item multiplication by the symbol of the fractional Riesz
potential related to chirp functions $m_\beta^{\boldsymbol{\alpha}}({\boldsymbol{u}})$, namely
$$h_\beta^{\boldsymbol{\alpha}}({\boldsymbol{u}})
:= m_\beta^{\boldsymbol{\alpha}}({\boldsymbol{u}})
g^{\boldsymbol{\alpha}}({\boldsymbol{u}})\  \mathrm{for}\  \mathrm{any}\  \boldsymbol{u} \in
\mathbb{R}^n;$$
	
\item multidimensional  FRFT of order $-\boldsymbol{\alpha}$, namely
$(I_\beta^{\boldsymbol{\alpha}}f) (\boldsymbol{x}):=
(\mathcal{F}_{-\boldsymbol{\alpha}} h_\beta^
{\boldsymbol{\alpha}})(\boldsymbol{x})$ for any $\boldsymbol{x} \in
\mathbb{R}^n$.
\end{enumerate}

\begin{figure}[H]
\centering
\begin{tikzpicture}[thick]
\node (start){$f(\boldsymbol{x})$};
\node[node distance=18mm, rectangle,draw,right of=start]
(FRFT){$\mathcal{F}_{\boldsymbol{\alpha}}$};
\node[node distance=18mm, inner sep=0pt,right of=FRFT]
(MA){$\bigotimes$};
\node[node distance=14mm, below of =MA] (p){$m_\beta^{\boldsymbol{\alpha}}
(\boldsymbol{u})$};
\node[node distance=18mm, rectangle,draw,right of=MA]
(iFRFT){$\mathcal{F}_{-\boldsymbol{\alpha}}$};
\node[node distance=24mm, right of=iFRFT] (end)
{$(I_\beta^{\boldsymbol{\alpha}}f)(\boldsymbol{x})$};
\draw[->](start)--(FRFT);
\draw[->](FRFT)--node[above]{$g^{\boldsymbol{\alpha}}
(\boldsymbol{u})$}(MA);
\draw[->](MA)--node[above]{$h_\beta^{\boldsymbol{\alpha}}
(\boldsymbol{u})$}(iFRFT);
\draw[->](iFRFT)--(end);
\draw[->](p)--(MA);
\end{tikzpicture}
\caption{The decomposition of $I_\beta^{\boldsymbol{\alpha}}(f)$}%
\label{FIG1}%
\end{figure}
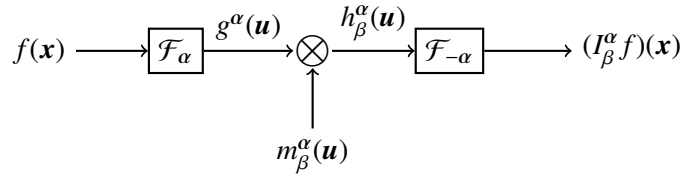

\begin{remark}
The symbol $m_\beta^{\boldsymbol{\alpha}}({\boldsymbol{u}})$ of the fractional Riesz potential related to chirp functions
  does not
contain $i$, and hence  there is no phase-shift  effect. It can only change the amplitude,
and hence the fractional Riesz potential related to chirp functions $m_\beta^{\boldsymbol{\alpha}}({\boldsymbol{u}})$ is  only an amplitude modulator.
\end{remark}

In \cite{fglwy2021}, the derivative formula of the multidimensional  FRFT was established
as follows.

\begin{lemma}\label{lem-frftdf}{\rm (see \cite{fglwy2021})}
{\rm (multidimensional  FRFT derivative formula)}
Let $\boldsymbol{\alpha}:=(\alpha_1, \ldots,
\alpha_n)\in  \mathbb{R}^n$ with
$\alpha_k \notin \pi \mathbb{Z}$ for any $k\in\{1,\ldots,n\}$, $f\in L^1(\mathbb{R}^n)$, and $e_{\boldsymbol{\alpha}}$ be the same as in Remark  \ref{rem-fft-ft}. If $k\in\{1,\ldots,n\}$ and $e_{\boldsymbol{ \alpha}}f$ is
absolutely continuous on $\mathbb{R}^n$ with respect to the $k$th
variable, then, for any $\boldsymbol{x}=(x_1, \ldots,x_n)\in \mathbb{R}^n$,
$$\mathcal{F}_{\boldsymbol{\alpha}}\left(e_{-\boldsymbol{\alpha}}
(\boldsymbol{y})\frac{\partial[e_{ \boldsymbol{\alpha}}
(\boldsymbol{y})f(\boldsymbol{y})]}{\partial y_k}\right)
(\boldsymbol{x})=2\pi ix_k\csc (\alpha_k)\mathcal{F}_\alpha(f)
(\boldsymbol{x}).$$
\end{lemma}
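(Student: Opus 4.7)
The plan is to reduce the claim to the classical Fourier transform derivative formula via the factorization \eqref{eq-1.1}, which expresses $\mathcal{F}_{\boldsymbol{\alpha}}$ as conjugation of the ordinary Fourier transform by multiplication by the chirp $e_{\boldsymbol{\alpha}}$ composed with the scaling $\boldsymbol{u}\mapsto\widetilde{\boldsymbol{u}}$. The key observation is that the combination $e_{-\boldsymbol{\alpha}}(\boldsymbol{y})\,\partial_{y_k}[e_{\boldsymbol{\alpha}}(\boldsymbol{y})f(\boldsymbol{y})]$ is precisely the conjugate of $\partial_{y_k}$ by the chirp multiplier, so this conjugation structure exactly matches the structure of \eqref{eq-1.1}.

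Concretely, I would set $g(\boldsymbol{y}):=e_{\boldsymbol{\alpha}}(\boldsymbol{y})f(\boldsymbol{y})$, so the function whose FRFT we are computing becomes $e_{-\boldsymbol{\alpha}}(\boldsymbol{y})\,\partial_{y_k}g(\boldsymbol{y})$. Applying \eqref{eq-1.1} to this function gives
\[
\mathcal{F}_{\boldsymbol{\alpha}}\!\left(e_{-\boldsymbol{\alpha}}\,\partial_{y_k}g\right)(\boldsymbol{x})
=c(\boldsymbol{\alpha})\,e_{\boldsymbol{\alpha}}(\boldsymbol{x})\,
\mathcal{F}\!\left(e_{\boldsymbol{\alpha}}\cdot e_{-\boldsymbol{\alpha}}\,\partial_{y_k}g\right)(\widetilde{\boldsymbol{x}})
=c(\boldsymbol{\alpha})\,e_{\boldsymbol{\alpha}}(\boldsymbol{x})\,\mathcal{F}(\partial_{y_k}g)(\widetilde{\boldsymbol{x}}),
\]
since $e_{\boldsymbol{\alpha}}e_{-\boldsymbol{\alpha}}\equiv 1$. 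I would then invoke the standard Fourier transform derivative property $\mathcal{F}(\partial_{y_k}g)(\boldsymbol{\xi})=2\pi i\xi_k\mathcal{F}(g)(\boldsymbol{\xi})$, evaluated at $\boldsymbol{\xi}=\widetilde{\boldsymbol{x}}$, whose $k$th component equals $x_k\csc(\alpha_k)$. This yields
\[
\mathcal{F}_{\boldsymbol{\alpha}}\!\left(e_{-\boldsymbol{\alpha}}\,\partial_{y_k}g\right)(\boldsymbol{x})
=2\pi ix_k\csc(\alpha_k)\cdot c(\boldsymbol{\alpha})\,e_{\boldsymbol{\alpha}}(\boldsymbol{x})\,\mathcal{F}(e_{\boldsymbol{\alpha}}f)(\widetilde{\boldsymbol{x}}),
\]
and one final application of \eqref{eq-1.1} (read from right to left) replaces the right-hand factor by $\mathcal{F}_{\boldsymbol{\alpha}}(f)(\boldsymbol{x})$, giving the claimed identity.

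The main obstacle, and the only place where the hypotheses enter nontrivially, is justifying the classical Fourier derivative formula for $g=e_{\boldsymbol{\alpha}}f$ under the stated regularity: we only know $f\in L^1(\mathbb{R}^n)$ and that $g$ is absolutely continuous in the $y_k$ variable. For this I would invoke Fubini to reduce the identity $\mathcal{F}(\partial_{y_k}g)(\widetilde{\boldsymbol{x}})=2\pi i\widetilde{x}_k\mathcal{F}(g)(\widetilde{\boldsymbol{x}})$ to a one-dimensional integration by parts in $y_k$, where absolute continuity exactly supplies the fundamental theorem of calculus, and then argue that the boundary terms vanish (using $f\in L^1$ together with the fact that $|e_{\boldsymbol{\alpha}}|=1$, so the absolute continuity hypothesis forces $e_{\boldsymbol{\alpha}}f$ to have vanishing limits along a set of $y_k$ with full measure in the orthogonal slice). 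Once this one-dimensional derivative formula is in hand, the argument above is just algebra, so the whole proof is essentially a three-line identity followed by this regularity justification.
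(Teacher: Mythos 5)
Your argument is correct: conjugating by the chirp via \eqref{eq-1.1} turns the left-hand side into $c(\boldsymbol{\alpha})e_{\boldsymbol{\alpha}}(\boldsymbol{x})\mathcal{F}(\partial_{y_k}g)(\widetilde{\boldsymbol{x}})$ with $g:=e_{\boldsymbol{\alpha}}f$, the classical formula $\mathcal{F}(\partial_{y_k}g)(\boldsymbol{\xi})=2\pi i\xi_k\mathcal{F}(g)(\boldsymbol{\xi})$ at $\boldsymbol{\xi}=\widetilde{\boldsymbol{x}}$ gives the factor $2\pi ix_k\csc(\alpha_k)$, and your justification of that formula (Fubini, one-dimensional integration by parts from absolute continuity, and vanishing slice limits since $g$ and $\partial_{y_k}g$ are integrable) is the standard one. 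The paper does not prove this lemma itself --- it quotes it from \cite{fglwy2021} --- but your reduction through \eqref{eq-1.1} is exactly the mechanism the paper uses for the analogous statements (e.g., Theorem \ref{thm-frp}), so the approach matches.
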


Then the relations among the fractional Riesz potential  related to chirp functions, the multidimensional  FRFT, and the fractional Riesz transform  related to chirp functions can be established as follows.

\begin{theorem} \label{thm-frt_frp}

For any $\boldsymbol{\alpha}:=(\alpha_1, \ldots,
\alpha_n)\in  \mathbb{R}^n$ with
$\alpha_k \notin \pi \mathbb{Z}$ for any $k\in\{1,\ldots,n\}$, and for $f\in \mathscr{S}(\mathbb{R}^n)$, one has
\begin{align*}
f =I_1^{\boldsymbol{\alpha}}\left(\sum^n_{j=1}
R_j^{\boldsymbol{\alpha}}\left(e_{-\boldsymbol{\alpha}}
\partial_j(e_{ \boldsymbol{\alpha}} f)\right) \right)
 =\sum^n_{j=1} I_1^{\boldsymbol{\alpha}}
\left( R_j^{\boldsymbol{\alpha}}\left(e_{-\boldsymbol{\alpha}}
\partial_j(e_{ \boldsymbol{\alpha}}f)\right)  \right)
 \end{align*}
in  $\mathscr{S}'(\mathbb{R}^n)$,
where $e_{\boldsymbol{\alpha}}$ is the same as in Remark  \ref{rem-fft-ft}, $R_j^{\boldsymbol{\alpha}}$ for any $j\in \{1,\ldots,n\}$ is the  jth fractional Riesz transform, and  $I_1^{\boldsymbol{\alpha}}$ is the
fractional Riesz potential $I_\beta^{\boldsymbol{\alpha}}$ with $\beta=1$.
\end{theorem}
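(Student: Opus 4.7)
\smallskip

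\noindent\textbf{Proof proposal.} The plan is to apply the multidimensional FRFT $\mathcal{F}_{\boldsymbol{\alpha}}$ to the candidate identity, verify that the images coincide as tempered distributions, and then conclude via the FRFT inversion theorem (Lemma \ref{lem-ifrft}), which guarantees $\mathcal{F}_{-\boldsymbol{\alpha}}\mathcal{F}_{\boldsymbol{\alpha}}g=g$ in $\mathscr{S}'(\mathbb{R}^n)$. Since every operator appearing on the right-hand side is a fractional Fourier multiplier, the whole computation reduces to algebraic manipulations with symbols on the FRFT side. The second equality in the statement is a triviality coming from the linearity of $I_1^{\boldsymbol{\alpha}}$, so the real content is the first equality.

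First, I would fix $f\in\mathscr{S}(\mathbb{R}^n)$ and $\boldsymbol{u}\in\mathbb{R}^n$ with $\widetilde{\boldsymbol{u}}\neq\boldsymbol{0}$, then compute $\mathcal{F}_{\boldsymbol{\alpha}}$ of a single summand $I_1^{\boldsymbol{\alpha}}\bigl(R_j^{\boldsymbol{\alpha}}(e_{-\boldsymbol{\alpha}}\partial_j(e_{\boldsymbol{\alpha}}f))\bigr)$. Applying Theorem \ref{thm-frp} with $\beta=1$ produces the factor $(2\pi)^{-1}|\widetilde{\boldsymbol{u}}|^{-1}$, applying the multiplier identity \eqref{eq-R} for $R_j^{\boldsymbol{\alpha}}$ produces the factor $-i\widetilde{u}_j/|\widetilde{\boldsymbol{u}}|$, and applying the FRFT derivative formula in Lemma \ref{lem-frftdf} to $e_{-\boldsymbol{\alpha}}\partial_j(e_{\boldsymbol{\alpha}}f)$ produces the factor $2\pi i u_j\csc(\alpha_j)=2\pi i\widetilde{u}_j$. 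Multiplying these three factors against $\mathcal{F}_{\boldsymbol{\alpha}}(f)(\boldsymbol{u})$ yields
\begin{align*}
\mathcal{F}_{\boldsymbol{\alpha}}\Bigl(I_1^{\boldsymbol{\alpha}}\bigl(R_j^{\boldsymbol{\alpha}}(e_{-\boldsymbol{\alpha}}\partial_j(e_{\boldsymbol{\alpha}}f))\bigr)\Bigr)(\boldsymbol{u})
=\frac{\widetilde{u}_j^{\,2}}{|\widetilde{\boldsymbol{u}}|^{2}}\,\mathcal{F}_{\boldsymbol{\alpha}}(f)(\boldsymbol{u}).
\end{align*}
Summing over $j\in\{1,\ldots,n\}$ the numerator becomes $|\widetilde{\boldsymbol{u}}|^{2}$ and hence the right-hand side collapses to $\mathcal{F}_{\boldsymbol{\alpha}}(f)(\boldsymbol{u})$. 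Applying $\mathcal{F}_{-\boldsymbol{\alpha}}$ to both sides and invoking Lemma \ref{lem-ifrft} then gives the claimed identity in $\mathscr{S}'(\mathbb{R}^n)$.

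The main subtlety, and what I expect to be the only nontrivial point, is justifying that each of the three multiplier identities—Theorem \ref{thm-frp}, \eqref{eq-R}, and Lemma \ref{lem-frftdf}—may be composed in the distributional sense to yield a pointwise-almost-everywhere identity for $\mathcal{F}_{\boldsymbol{\alpha}}(f)$. Because $f\in\mathscr{S}(\mathbb{R}^n)$, the hypothesis in Lemma \ref{lem-frftdf} that $e_{\boldsymbol{\alpha}}f$ is absolutely continuous in each variable is automatic, and $\mathcal{F}_{\boldsymbol{\alpha}}(f)\in\mathscr{S}(\mathbb{R}^n)$ by \eqref{eq-1.1}, so the factor $|\widetilde{\boldsymbol{u}}|^{-1}$ arising from $I_1^{\boldsymbol{\alpha}}$ is locally integrable on $\mathbb{R}^n$ for $n\geq 2$ and the cancellation $\widetilde{u}_j^{\,2}/|\widetilde{\boldsymbol{u}}|^{2}$ is bounded, so no singularity issue obstructs the distributional equality. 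This is the place where the argument must be phrased carefully, but no new estimate is required beyond the three lemmas already at hand.
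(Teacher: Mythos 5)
Your proposal is correct and follows essentially the same route as the paper's proof: apply $\mathcal{F}_{\boldsymbol{\alpha}}$, combine the symbol $(2\pi)^{-1}|\widetilde{\boldsymbol{u}}|^{-1}$ from Theorem \ref{thm-frp}, the multiplier $-i\widetilde{u}_j/|\widetilde{\boldsymbol{u}}|$ from \eqref{eq-R}, and the factor $2\pi i\widetilde{u}_j$ from Lemma \ref{lem-frftdf} to get $\widetilde{u}_j^{\,2}/|\widetilde{\boldsymbol{u}}|^{2}$, sum over $j$, and invert via Lemma \ref{lem-ifrft}. Your closing remarks on the distributional justification are a sensible addition that the paper leaves implicit.
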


\begin{proof}From   Theorem \ref{thm-frp}, \eqref{eq-R}, and
Lemma \ref{lem-frftdf}, we   deduce that, for any $\boldsymbol{x}\in \mathbb{R}^n$,
\begin{align*}
&\mathcal{F}_{\boldsymbol{\alpha}}\left(I_1^{\boldsymbol{\alpha}}
\left(\sum^n_{j=1}R_j^{\boldsymbol{\alpha}}\left(
e_{-\boldsymbol{\alpha}}  \partial_j(e_{ \boldsymbol{\alpha}}f)
\right) \right) \right)(\boldsymbol{x})\\
&\quad=(2\pi)^{-1}|\widetilde{
\boldsymbol{x}}|^{-1}\  \mathcal{F}_{\boldsymbol{\alpha}}\left(
\sum^n_{j=1}R_j^{\boldsymbol{\alpha}}\left(e_{-\boldsymbol{\alpha}}
\partial_j(e_{ \boldsymbol{\alpha}}f)\right) \right)
(\boldsymbol{x})\\
&\quad=(2\pi)^{-1}|\widetilde{\boldsymbol{x}}|^{-1}\sum^n_{j=1}-i
\frac{\widetilde{x}_j}{|\widetilde{\boldsymbol{x}}|}\mathcal{F}_{\boldsymbol{
\alpha}}\left(e_{-\boldsymbol{\alpha}}  \partial_j(
e_{ \boldsymbol{\alpha}}f) \right) (\boldsymbol{x})\\
&\quad=(2\pi)^{-1}|\widetilde{\boldsymbol{x}}|^{-1}\sum^n_{j=1}-i
\frac{\widetilde{x}_j}{|\widetilde{\boldsymbol{x}}|}2\pi i\widetilde{x}_j
\mathcal{F}_{\boldsymbol{\alpha}}(f)(\boldsymbol{x})=\mathcal{F}_{\boldsymbol{\alpha}}(f)(\boldsymbol{x}).
\end{align*}
Then, by taking the multidimensional  FRFT of order $-\boldsymbol{\alpha}$ and using Lemma \ref{lem-ifrft}, we obtain the desired identity, which
completes the proof of   Theorem \ref{thm-frt_frp}.
\end{proof}

\begin{remark}
When $\boldsymbol{\alpha}=(\frac{\pi}{2}+k_1\pi, \ldots,
\frac{\pi}{2}+k_n\pi)$ with  $\{k_j\}_{j=1}^n\subset \mathbb{Z}$, in this case,
Theorem \ref{thm-frp} goes back to \eqref{eq-RP} of \cite[pp. 124]{ldy2007} and
Theorem \ref{thm-frt_frp} goes back to  (3.0.3) of
\cite[pp. 124]{ldy2007}
 .
\end{remark}

From Lemma \ref{lem-frftdf}, we can easily  deduce the following
theorem; we omit the details.
\begin{theorem}\label{thm-frftdf}
Let   $\boldsymbol{\alpha}:=(\alpha_1, \ldots,
\alpha_n)\in  \mathbb{R}^n$ with
$\alpha_k \notin \pi \mathbb{Z}$ for any $k\in\{1,\ldots,n\}$,   $\boldsymbol{\varsigma}\in\zz_+^n$, and  $e_{\boldsymbol{\alpha}}$ be the chirp function. For any $f\in \mathscr{S}(\mathbb{R}^n)$ and
$\boldsymbol{x}:=(x_1,\ldots,x_n)\in \mathbb{R}^n$,  one has
\begin{align*}
\mathcal{F}_{\boldsymbol{\alpha}}\left(e_{-\boldsymbol{\alpha}}
(\boldsymbol{y}) {\partial^{\boldsymbol{\varsigma}}
[e_{ \boldsymbol{\alpha}}(\boldsymbol{y})f(\boldsymbol{y})]}
\right)(\boldsymbol{x})=(2\pi i\widetilde{\boldsymbol{x}})^
{\boldsymbol{\varsigma}}
\mathcal{F}_\alpha (f)(\boldsymbol{x}),
\end{align*}
where $\widetilde{\boldsymbol{x}}:=(\widetilde{x}_1 ,\ldots,\widetilde{x}
_n ):=(x_1\csc(\alpha_1),\ldots,x_n\csc(\alpha_n))$.
\end{theorem}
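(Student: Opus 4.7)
The plan is to prove Theorem \ref{thm-frftdf} by induction on $|\boldsymbol{\varsigma}|$, using Lemma \ref{lem-frftdf} as the base engine. The base case $|\boldsymbol{\varsigma}|=0$ reduces to the identity $\mathcal{F}_{\boldsymbol{\alpha}}(f)=\mathcal{F}_{\boldsymbol{\alpha}}(f)$, while the case $|\boldsymbol{\varsigma}|=1$ (with $\boldsymbol{\varsigma}=e_k$, the $k$th standard basis vector) is exactly the content of Lemma \ref{lem-frftdf}, since $(2\pi i\widetilde{\boldsymbol{x}})^{e_k}=2\pi ix_k\csc(\alpha_k)$.

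For the inductive step, suppose the formula holds for some multi-index $\boldsymbol{\varsigma}\in\mathbb{Z}_+^n$ and fix $k\in\{1,\ldots,n\}$; I will prove it for $\boldsymbol{\varsigma}+e_k$. The key trick is to set
$$g(\boldsymbol{y}):=e_{-\boldsymbol{\alpha}}(\boldsymbol{y})\,\partial^{\boldsymbol{\varsigma}}\bigl[e_{\boldsymbol{\alpha}}(\boldsymbol{y})f(\boldsymbol{y})\bigr],$$
so that $e_{\boldsymbol{\alpha}}g=\partial^{\boldsymbol{\varsigma}}(e_{\boldsymbol{\alpha}}f)$, and therefore
$$e_{-\boldsymbol{\alpha}}(\boldsymbol{y})\,\partial^{\boldsymbol{\varsigma}+e_k}\bigl[e_{\boldsymbol{\alpha}}(\boldsymbol{y})f(\boldsymbol{y})\bigr] = e_{-\boldsymbol{\alpha}}(\boldsymbol{y})\,\partial_k\bigl[e_{\boldsymbol{\alpha}}(\boldsymbol{y})g(\boldsymbol{y})\bigr].$$
Applying Lemma \ref{lem-frftdf} to $g$ (with the $k$th variable) yields
$$\mathcal{F}_{\boldsymbol{\alpha}}\bigl(e_{-\boldsymbol{\alpha}}\partial_k(e_{\boldsymbol{\alpha}}g)\bigr)(\boldsymbol{x}) = 2\pi i\widetilde{x}_k\,\mathcal{F}_{\boldsymbol{\alpha}}(g)(\boldsymbol{x}),$$
and the inductive hypothesis, applied to the same $f$ with index $\boldsymbol{\varsigma}$, gives
$$\mathcal{F}_{\boldsymbol{\alpha}}(g)(\boldsymbol{x}) = (2\pi i\widetilde{\boldsymbol{x}})^{\boldsymbol{\varsigma}}\,\mathcal{F}_{\boldsymbol{\alpha}}(f)(\boldsymbol{x}).$$
Multiplying these together produces $(2\pi i\widetilde{\boldsymbol{x}})^{\boldsymbol{\varsigma}+e_k}\mathcal{F}_{\boldsymbol{\alpha}}(f)(\boldsymbol{x})$, which closes the induction.

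The only substantive point to verify is that the hypotheses of Lemma \ref{lem-frftdf} remain available at each step; specifically, I need $e_{\boldsymbol{\alpha}}g=\partial^{\boldsymbol{\varsigma}}(e_{\boldsymbol{\alpha}}f)$ to be absolutely continuous in the $k$th variable and $g\in L^1(\mathbb{R}^n)$. Since $f\in\mathscr{S}(\mathbb{R}^n)$, the product $e_{\boldsymbol{\alpha}}f$ is smooth and rapidly decreasing in every variable (the chirp factor has unit modulus), so all derivatives $\partial^{\boldsymbol{\varsigma}}(e_{\boldsymbol{\alpha}}f)$ are smooth and integrable; multiplying back by $e_{-\boldsymbol{\alpha}}$ preserves integrability and smoothness. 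Hence no integrability or regularity obstruction arises, and the induction goes through cleanly. This is the sense in which the details are routine and the author omits them.
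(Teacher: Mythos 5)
Your induction on $|\boldsymbol{\varsigma}|$, iterating Lemma \ref{lem-frftdf} one coordinate at a time via the substitution $g:=e_{-\boldsymbol{\alpha}}\partial^{\boldsymbol{\varsigma}}(e_{\boldsymbol{\alpha}}f)$, is correct and is precisely the argument the paper has in mind when it says the theorem follows from Lemma \ref{lem-frftdf} with details omitted. Your verification that $g$ stays Schwartz (hence in $L^1(\mathbb{R}^n)$ with $e_{\boldsymbol{\alpha}}g$ absolutely continuous) at each step supplies exactly the routine detail the authors skip.
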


 From Lemma \ref{lem-frftdf},   it follows that, for any $\boldsymbol{\alpha}:=(\alpha_1, \ldots,
\alpha_n)\in  \mathbb{R}^n$ with
$\alpha_k \notin \pi \mathbb{Z}$ for any $k\in\{1,\ldots,n\}$, $f\in \mathscr{S}(\mathbb{R}^n)$, and  $\boldsymbol{x}:=(x_1,\ldots,x_n)\in \mathbb{R}^n$,
\begin{align}\label{eq-la}
\mathcal{F}_{\boldsymbol{\alpha}}\left(-e_{-\boldsymbol{\alpha}}
 { \Delta[e_{ \boldsymbol{\alpha}}
 f ]}\right)(\boldsymbol{x})
=4\pi^2|\widetilde{\boldsymbol{x}}|^{2}\mathcal{F}_\alpha (f)
(\boldsymbol{x}),
\end{align}
where $\widetilde{\boldsymbol{x}}:=(\widetilde{x}_1 ,\ldots,\widetilde{x}
_n ):=(x_1\csc(\alpha_1),\ldots,x_n\csc(\alpha_n))$, $e_{\boldsymbol{\alpha}}$ is the same as in Remark  \ref{rem-fft-ft}, and $\Delta$ is the \emph{Laplace operator}
$ \sum_{j=1}^n\frac{\partial^{2}}{\partial  x_j^2}.$

Motivated by \eqref{eq-la}, we replace the exponent $2$ in \eqref{eq-la} by a
positive real number to introduce  fractional Laplace operators
related to chirp functions.

\begin{definition} \label{def-frftd}
Suppose $\boldsymbol{\alpha}:=(\alpha_1, \ldots,
\alpha_n)\in  \mathbb{R}^n$ with
$\alpha_k \notin \pi \mathbb{Z}$ for any $k\in\{1,\ldots,n\}$, $e_{\boldsymbol{\alpha}}$ is the same as in Remark  \ref{rem-fft-ft}, $z\in \mathbb{R}$, and  $z\in(0,\infty)$. The
\emph{fractional Laplace operator related to chirp functions}, $\Delta_{z}$, is defined by setting, for any $f\in \mathscr{S}(\mathbb{R}^n)$   and $\boldsymbol{x}:=(x_1,\ldots,x_n) \in \mathbb{R}^n$,
$$\mathcal{F}_{\boldsymbol{\alpha}}\left(\left[-e_{-\boldsymbol
{\alpha}}(\boldsymbol{y}) { \Delta_{z}[e_{ \boldsymbol{\alpha}}
(\boldsymbol{y})f(\boldsymbol{y})]}\right]^{\frac{z}{2}}\right)
(\boldsymbol{x})
:=(2\pi)^z|\widetilde{\boldsymbol{x}}|^{z}\mathcal{F}_\alpha(f)
(\boldsymbol{x}),$$
where    $\widetilde{\boldsymbol{x}}:=(\widetilde{x}_1 ,\ldots,\widetilde{x}
_n)=(x_1\csc(\alpha_1),\ldots,x_n\csc(\alpha_n))$.
The
$(2\pi)^z|\widetilde{\boldsymbol{x}}|^{z}$ is called the   \emph{symbol} of the
fractional Laplace operators related to chirp functions, $\Delta_{z}$.
\end{definition}

Obviously, the fractional Laplace operator related to chirp functions, $\Delta_{z}$, can be
viewed as the inverse operator of the fractional Riesz potential related to chirp functions and plays a key role in the decryption process
of the image encryption   in Section \ref{sec4}.

\subsection{Boundedness of fractional Riesz potentials related to chirp functions} \label{sec2.2}
In this subsection, we establish the boundedness of the fractional Riesz
potential related to chirp functions on the rotation invariant space related to chirp functions, which
is equivalent to the boundedness of the classical Riesz
potential  on the same rotation invariant space related to chirp functions.

\begin{definition}\label{def-ris}
Let $(X,\|\cdot\|_X)$ be a Banach space. Then   $X$ is called
the \emph{rotation invariant space related to chirp functions} if, for any $\boldsymbol{\alpha}:=(\alpha_1, \ldots,
\alpha_n)\in  \mathbb{R}^n$ with
$\alpha_k \notin \pi \mathbb{Z}$ for any $k\in\{1,\ldots,n\}$, and for any $f\in X$,
$$\|e_{\boldsymbol{\alpha}}f\|_X=\|f\|_X,$$
  where $e_{\boldsymbol{\alpha}}$ is the same as in Remark  \ref{rem-fft-ft}.
\end{definition}

\begin{remark}
The well-konwn rotation invariant space  includes the Lebesgue space,
the Morrey space, and the Herz space.
\end{remark}

\begin{theorem}\label{the-rb}
If $X, Y$ are two rotation invariant spaces related to chirp functions, $\beta\in(0,n)$, and
$\boldsymbol{\alpha}:=(\alpha_1, \ldots,
\alpha_n)\in  \mathbb{R}^n$ with
$\alpha_k \notin \pi \mathbb{Z}$ for any $k\in\{1,\ldots,n\}$, then $I_\beta$ is
bounded from $X$ to $Y$ if and only if
$I_\beta^{\boldsymbol{\alpha}}$ is bounded from $X$ to $Y$.
\end{theorem}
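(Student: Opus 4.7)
The plan is to exploit the pointwise identity
\[
I_\beta^{\boldsymbol{\alpha}}(f)(\boldsymbol{x}) = e_{-\boldsymbol{\alpha}}(\boldsymbol{x})\, I_\beta\bigl(e_{\boldsymbol{\alpha}}\, f\bigr)(\boldsymbol{x}),
\]
which is read off immediately by comparing Definition \ref{def-frp} with \eqref{eq-beta}, and then combine it with the defining property $\|e_{\boldsymbol{\alpha}} g\|_X = \|g\|_X$ of a rotation invariant space related to chirp functions (Definition \ref{def-ris}). Since $|e_{\boldsymbol{\alpha}}| = 1$ pointwise, multiplication by $e_{\pm\boldsymbol{\alpha}}$ is an isometry on both $X$ and $Y$ (note that $-\boldsymbol{\alpha}$ also satisfies $-\alpha_k \notin \pi\mathbb{Z}$ whenever $\boldsymbol{\alpha}$ does, so the rotation invariance applies in either direction).

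For the forward implication, I would assume that $I_\beta$ is bounded from $X$ to $Y$ and, given $f \in X$, compute
\[
\bigl\|I_\beta^{\boldsymbol{\alpha}}(f)\bigr\|_Y = \bigl\|e_{-\boldsymbol{\alpha}}\, I_\beta(e_{\boldsymbol{\alpha}} f)\bigr\|_Y = \bigl\|I_\beta(e_{\boldsymbol{\alpha}} f)\bigr\|_Y \ls \bigl\|e_{\boldsymbol{\alpha}} f\bigr\|_X = \|f\|_X,
\]
where the second equality uses rotation invariance of $Y$, the inequality uses the hypothesis, and the last equality uses rotation invariance of $X$.

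For the converse, I would assume $I_\beta^{\boldsymbol{\alpha}}$ is bounded and, for $g \in X$, set $f := e_{-\boldsymbol{\alpha}} g$ so that $e_{\boldsymbol{\alpha}} f = g$ and $\|f\|_X = \|g\|_X$ by rotation invariance of $X$. Then the identity rearranges to $I_\beta(g) = e_{\boldsymbol{\alpha}}\, I_\beta^{\boldsymbol{\alpha}}(f)$, and rotation invariance of $Y$ gives
\[
\|I_\beta(g)\|_Y = \bigl\|e_{\boldsymbol{\alpha}}\, I_\beta^{\boldsymbol{\alpha}}(f)\bigr\|_Y = \bigl\|I_\beta^{\boldsymbol{\alpha}}(f)\bigr\|_Y \ls \|f\|_X = \|g\|_X.
\]

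There is really no serious obstacle here; the whole content is the chirp intertwining $I_\beta^{\boldsymbol{\alpha}} = M_{e_{-\boldsymbol{\alpha}}} \circ I_\beta \circ M_{e_{\boldsymbol{\alpha}}}$ (conjugation by a unimodular multiplier), which reduces the two boundedness statements to each other through an isometry. The only minor subtlety worth noting explicitly is that the integral defining $I_\beta(e_{\boldsymbol{\alpha}} f)$ makes sense and coincides with the absolutely convergent integral in Definition \ref{def-frp}, since $|e_{\boldsymbol{\alpha}} f| = |f|$, so no integrability issues arise from inserting or removing the chirp factor.
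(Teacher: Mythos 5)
Your proposal is correct and follows essentially the same route as the paper: both rest on the conjugation identity $I_\beta^{\boldsymbol{\alpha}}(f)=e_{-\boldsymbol{\alpha}}\,I_\beta(e_{\boldsymbol{\alpha}}f)$ together with the isometry of multiplication by the unimodular chirp on $X$ and $Y$. The paper's converse direction is the same computation with $e_{-\boldsymbol{\alpha}}f$ in place of your substitution $f:=e_{-\boldsymbol{\alpha}}g$.
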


\begin{proof} Let $\|I_\beta\|_{X\rightarrow Y}
<\infty$. By Definition \ref{def-frp} and Definition \ref{def-ris}, we conclude  that,   for any  $f\in X$,
\begin{align*}
\|I_\beta^{\boldsymbol{\alpha}}(f)\|_{Y}=\|I_\beta
(e_{\boldsymbol{\alpha}}f) \|_{Y}
\ls\|e_{\boldsymbol{\alpha}}f\|_{X}
= \|f\|_{X}.
\end{align*}
Conversely, if
$\|I_\beta^{\boldsymbol{\alpha}} \|_{X\rightarrow Y}<\infty$,
then we obtain,   for any  $f\in X$,
\begin{align*}
\|I_\beta(f)\|_{Y}=\|e_{-\boldsymbol{\alpha}}I_\beta(f)\|_{Y}
=\|I_\beta^{\boldsymbol{\alpha}}(e_{-\boldsymbol{\alpha}}f)\|_{Y}
\ls\|e_{-\boldsymbol{\alpha}}f\|_{X}
=\|f\|_{X}.
\end{align*}
This finishes the proof of Theorem  \ref{the-rb}.
\end{proof}

From Theorem \ref{the-rb}, we can easily  deduce the following
corollary; we omit the details.

\begin{corollary}{\rm (Hardy--Littlewood--Sobolev theorem)}
\label{cor-bound}
Let $\beta\in(0,n)$,   $\boldsymbol{\alpha}:=(\alpha_1, \ldots,
\alpha_n)\in  \mathbb{R}^n$ with
$\alpha_k \notin \pi \mathbb{Z}$ for any $k\in\{1,\ldots,n\}$, $p\in[1,\frac{n}{\beta})$,
and  $\frac{1}{q}:=\frac{1}{p}-\frac{n}{\beta}.$
	\begin{itemize}
		\item[\rm (i)] If $p\in(1,\frac{n}{\beta})$,
then there exists a positive constant $C$ such that, for any  $f\in L^p(\mathbb{R}^n)$, $$\|I_\beta^{\boldsymbol{\alpha}}f\|_{L^q(\mathbb{R}^n)}\leq C \|f\|_{L^p(\mathbb{R}^n)};$$
  \item[\rm (ii)]    There exists a positive constant $C$ such that, for     $f\in L^1(\mathbb{R}^n)$ and any
$\lambda\in(0,\infty)$, $$|\{x\in \mathbb{R}^n:|I_\beta^{\boldsymbol{\alpha}}
f({\boldsymbol{x}})|>\lambda\}|\le C \left[\frac{1}{\lambda}\|f\|
_{L^1(\mathbb{R}^n)}\right]^{\frac{n}{n-\beta}}.$$
	\end{itemize}
\end{corollary}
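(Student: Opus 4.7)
The plan is to reduce both parts of Corollary \ref{cor-bound} to the classical Hardy--Littlewood--Sobolev inequality for the ordinary Riesz potential $I_\beta$, exploiting the fact that the chirp factor $e_{\boldsymbol{\alpha}}$ is unimodular. I would first observe that, for every $p\in[1,\infty]$, the Lebesgue space $L^p(\rn)$ is a rotation invariant space related to chirp functions in the sense of Definition \ref{def-ris}: since $|e_{\boldsymbol{\alpha}}(\boldsymbol{x})|=1$ for every $\boldsymbol{x}\in\rn$, one has $\|e_{\boldsymbol{\alpha}}f\|_{L^p(\rn)}=\|f\|_{L^p(\rn)}$, and likewise with $-\boldsymbol{\alpha}$ in place of $\boldsymbol{\alpha}$.

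For part (i), with $p\in(1,n/\beta)$ and $q$ the associated Sobolev exponent, the classical Hardy--Littlewood--Sobolev inequality (see, e.g., \cite{s1993}) gives the bound $\|I_\beta f\|_{L^q(\rn)}\ls\|f\|_{L^p(\rn)}$ for the ordinary Riesz potential. Theorem \ref{the-rb} applied with $X:=L^p(\rn)$ and $Y:=L^q(\rn)$ then transfers this bound verbatim to $I_\beta^{\boldsymbol{\alpha}}$, which is exactly the assertion in (i).

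For part (ii), Theorem \ref{the-rb} does not apply directly because the weak-type endpoint is not a norm inequality between Banach spaces. The workaround is the pointwise identity
$$
\bigl|I_\beta^{\boldsymbol{\alpha}}(f)(\boldsymbol{x})\bigr|=\bigl|I_\beta(e_{\boldsymbol{\alpha}}f)(\boldsymbol{x})\bigr|\quad\text{for every }\boldsymbol{x}\in\rn,
$$
which is immediate from Definition \ref{def-frp} together with $|e_{-\boldsymbol{\alpha}}|\equiv 1$. Hence $I_\beta^{\boldsymbol{\alpha}}f$ and $I_\beta(e_{\boldsymbol{\alpha}}f)$ have identical distribution functions, so the classical weak-type $(1,n/(n-\beta))$ estimate for $I_\beta$, combined with $\|e_{\boldsymbol{\alpha}}f\|_{L^1(\rn)}=\|f\|_{L^1(\rn)}$, yields the claim. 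No substantive obstacle is expected: once the unimodularity of the chirp is exploited, the whole argument is bookkeeping on top of the classical theorem, the only mild subtlety being the need to sidestep Theorem \ref{the-rb} at the $L^1$ endpoint by appealing to the pointwise identity instead.
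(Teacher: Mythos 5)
Your proposal is correct and follows essentially the same route as the paper, which simply states that the corollary can be deduced from Theorem \ref{the-rb} and omits the details: part (i) is exactly Theorem \ref{the-rb} applied with $X:=L^p(\mathbb{R}^n)$ and $Y:=L^q(\mathbb{R}^n)$ combined with the classical Hardy--Littlewood--Sobolev theorem. Your extra care at the $L^1$ endpoint, replacing the norm-equivalence argument by the pointwise identity $|I_\beta^{\boldsymbol{\alpha}}(f)|=|I_\beta(e_{\boldsymbol{\alpha}}f)|$ (which is also the identity underlying the proof of Theorem \ref{the-rb}), is a sensible and correct way to fill in the detail the paper glosses over.
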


\section{Simulation of   fractional Riesz
potentials related to chirp functions\label{sec3}}

In this section, we apply the fractional Riesz potential related to chirp functions to
an image with the help of the FRFT discrete
algorithm (see, for instance, \cite{bm2004,bm,tlz2010}).

 In the following Fig. \ref{FIG2}, we  give  the numerical simulation of $I_{\beta}^{(\pi/4,\pi/4)}$. In the continuous case, the following Fig.
\ref{FIG2}(a)  can be regarded as  the function
\[f(x_1,x_2):=
   \left\{
\begin{array}[c]{ll}
0,  & \quad  \forall\,(x_1,x_2) \in [0,200]^2 \cup [200,400]^2,  \\
255,  & \quad {\rm otherwise}
		\end{array}
   \right.
\]
on $\mathbb{R}^2$.

\begin{figure}[H]
\centering
\subfigure[]{\includegraphics[width=0.3\linewidth]{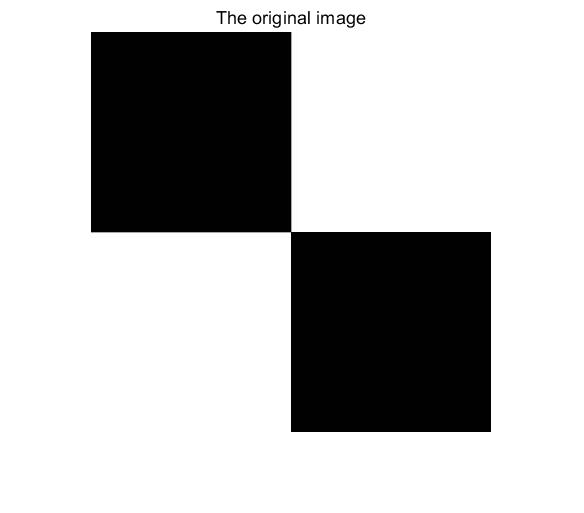} }
\subfigure[]{\includegraphics[width=0.3\linewidth]{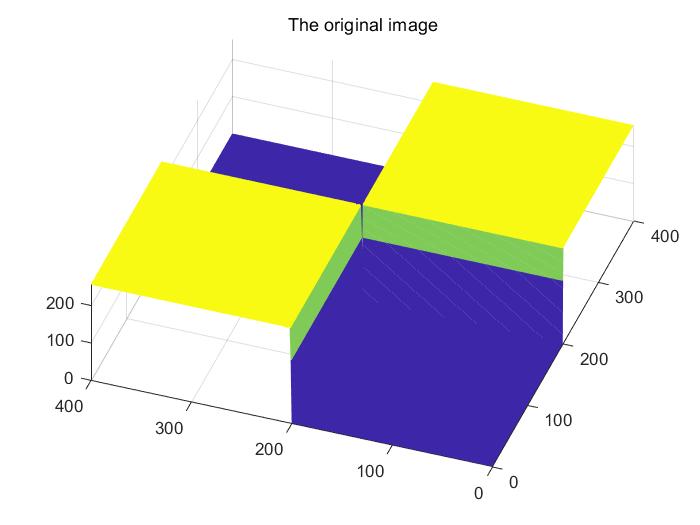} }
\subfigure[]{\includegraphics[width=0.3\linewidth]{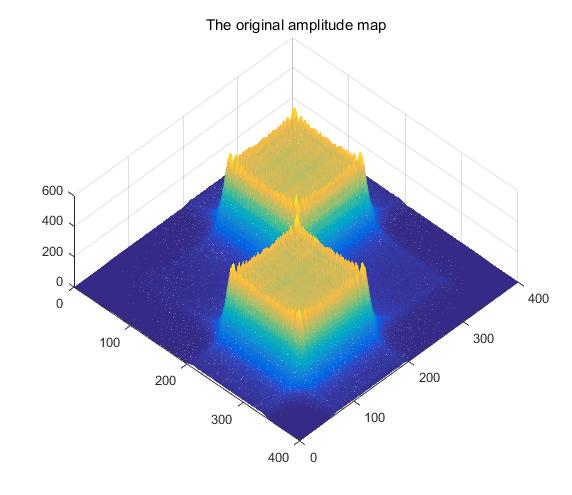} }
\subfigure[]{\includegraphics[width=0.3\linewidth]{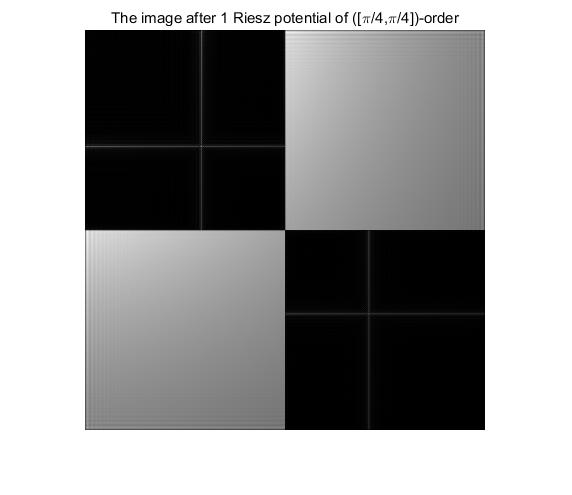} }
\subfigure[]{\includegraphics[width=0.3\linewidth]{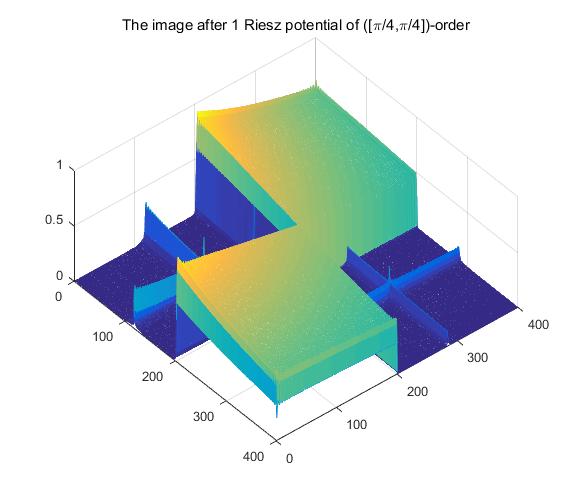} }
\subfigure[]{\includegraphics[width=0.3\linewidth]{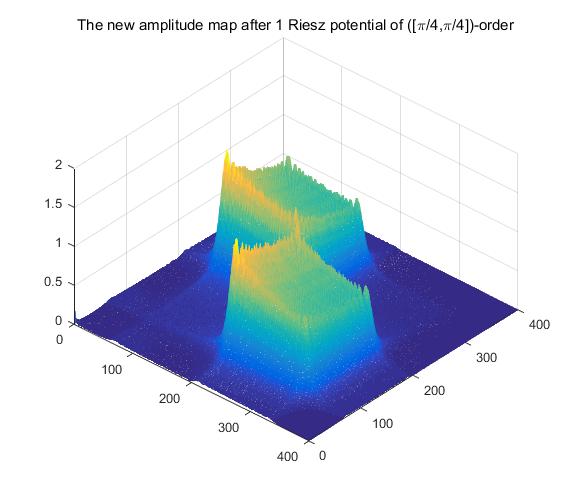} }
\end{figure}

\begin{figure}[H]
\centering
\subfigure[]{\includegraphics[width=0.3\linewidth]{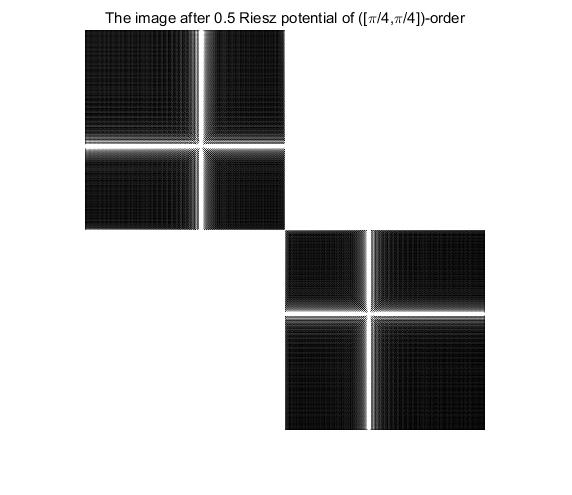} }
\subfigure[]{\includegraphics[width=0.3\linewidth]{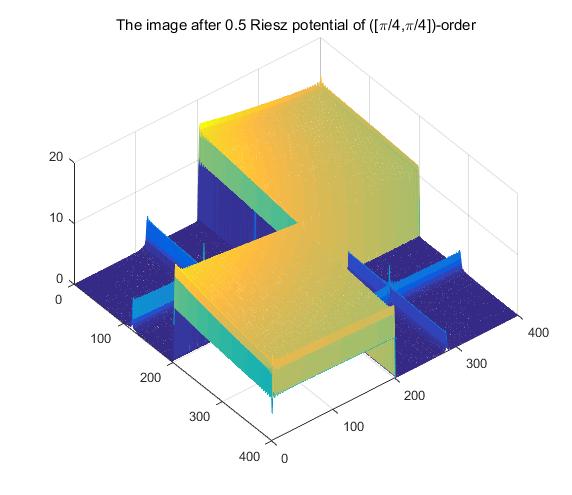} }
\subfigure[]{\includegraphics[width=0.3\linewidth]{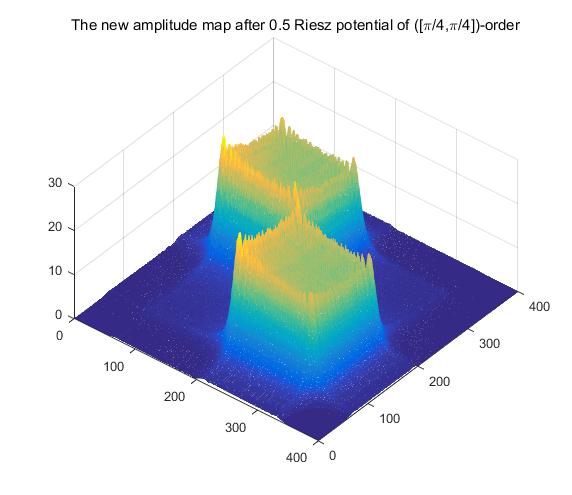} }
\subfigure[]{\includegraphics[width=0.3\linewidth]{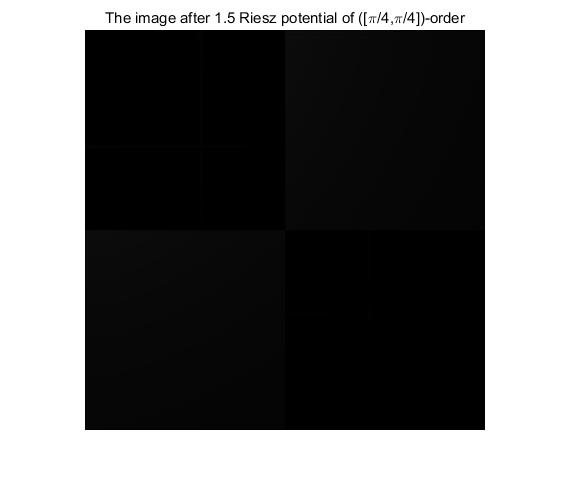} }
\subfigure[]{\includegraphics[width=0.3\linewidth]{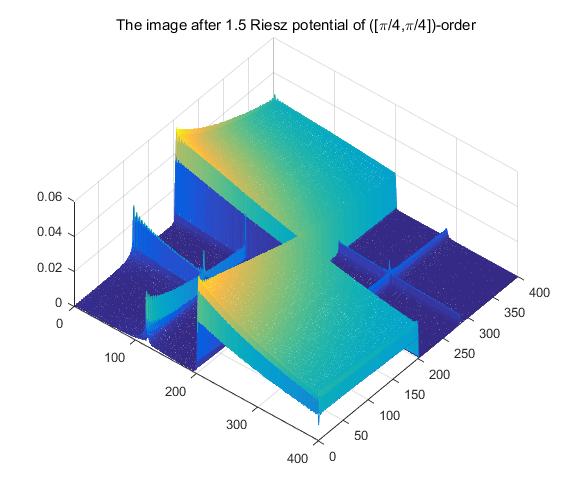} }
\subfigure[]{\includegraphics[width=0.3\linewidth]{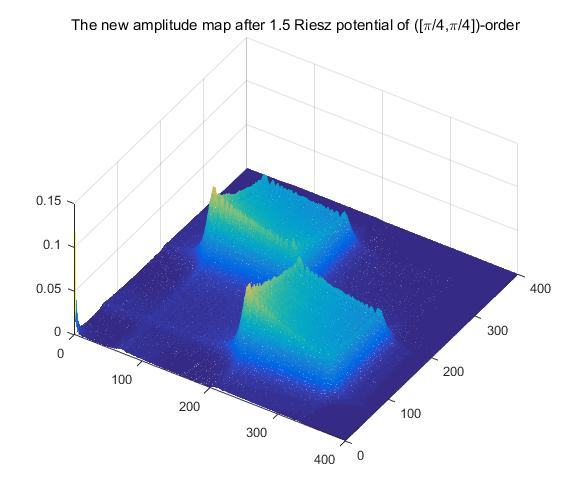} }
\caption{Numerical simulation of $I_{\beta}^{(\pi/4,\pi/4)}$}
\label{FIG2}%
\end{figure}

As is shown in the above Fig.
\ref{FIG2}, (a) is the original 2-dimensional   grayscale
image with 400 pixels $\times$ 400 pixels; (d), (g), and (k)
are the 2-dimensional   grayscale images, respectively, after
$I_{1}^{(\pi/4,\pi/4)}f$, $I_{0.5}^{(\pi/4,\pi/4)}f$, and
$I_{1.5}^{(\pi/4,\pi/4)}f$.    Graphs
(b), (e), (h), and (k) of Fig. \ref{FIG2} are the 3-dimensional color graphs
of $f$, $I_{1}^{(\pi/4,\pi/4)}f$, $I_{0.5}^{(\pi/4,\pi/4)}f$,
and $I_{1.5}^{(\pi/4,\pi/4)}f$, respectively.
 Graphs (c), (f), (i), and (l) of Fig. \ref{FIG2} are the amplitude
images  in the fractional Fourier domain of order
$\boldsymbol{\alpha}=(\pi/4,\pi/4)$ of $f$,
$I_{1}^{(\pi/4,\pi/4)}f$, $I_{0.5}^{(\pi/4,\pi/4)}f$,
and $I_{1.5}^{(\pi/4,\pi/4)}f$, respectively.

 In the following  Fig. \ref{FIG3}, we   give  the numerical simulation of
$I_{\beta}^{(\pi/8,3\pi/8)}$.

\begin{figure}[H]
\centering
\subfigure[]{\includegraphics[width=0.3\linewidth]{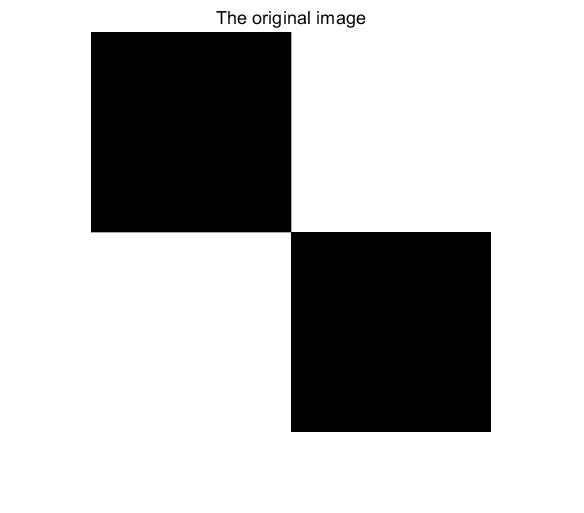} }
\subfigure[]{\includegraphics[width=0.3\linewidth]{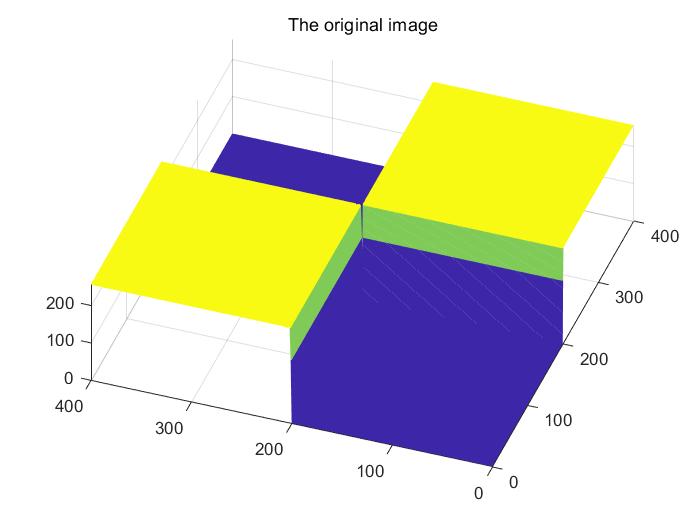} }
\subfigure[]{\includegraphics[width=0.3\linewidth]{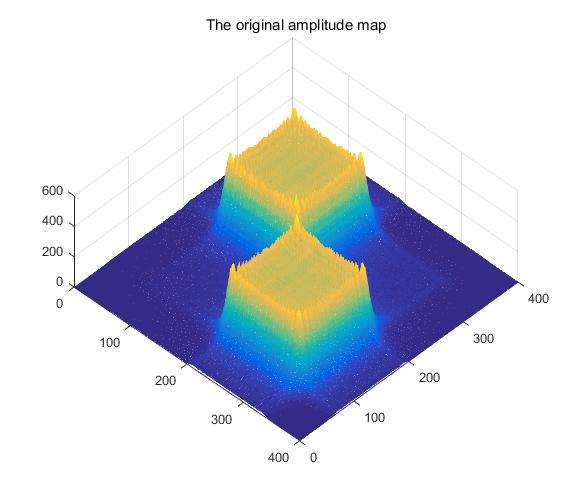} }
\end{figure}

\begin{figure}[H]
\centering
\subfigure[]{\includegraphics[width=0.3\linewidth]{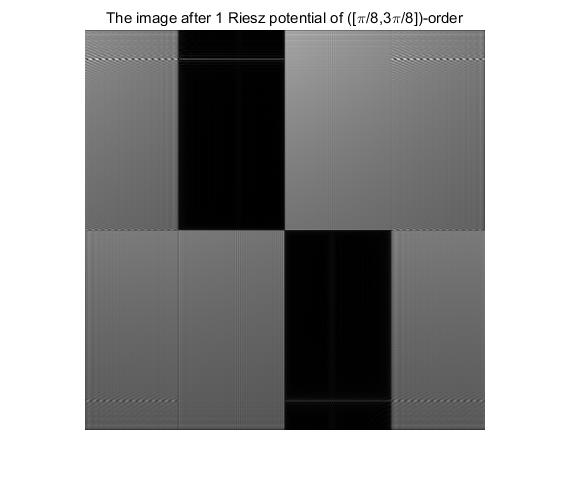} }
\subfigure[]{\includegraphics[width=0.3\linewidth]{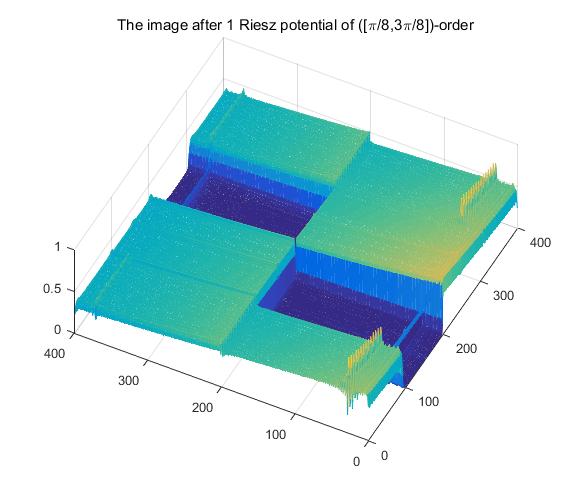} }
\subfigure[]{\includegraphics[width=0.3\linewidth]{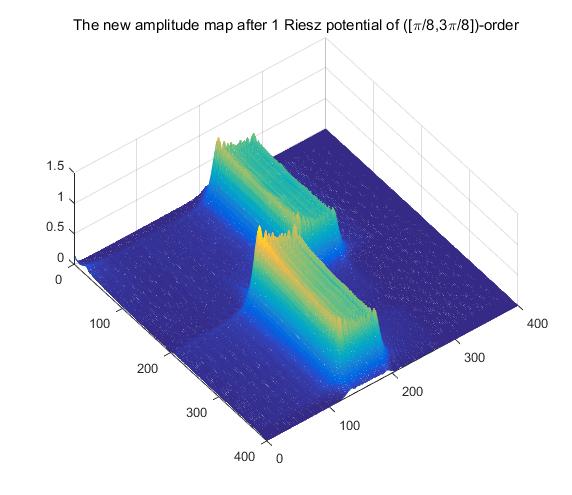} }
\subfigure[]{\includegraphics[width=0.3\linewidth]{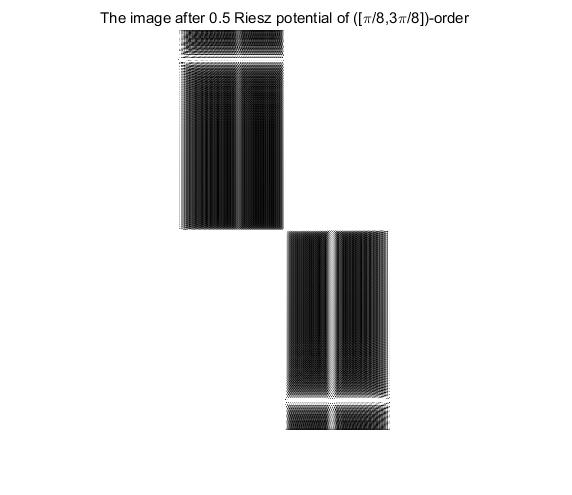} }
\subfigure[]{\includegraphics[width=0.3\linewidth]{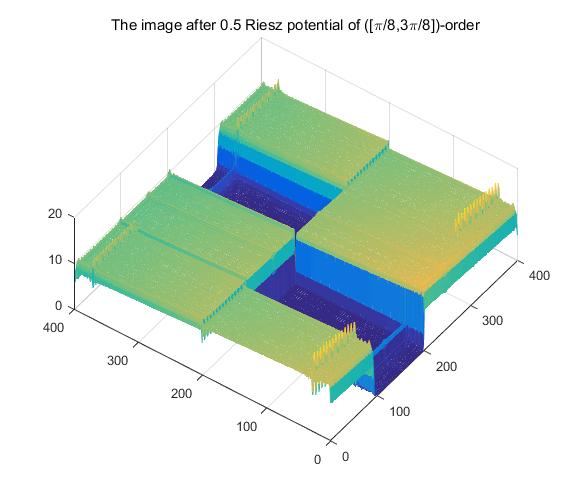} }
\subfigure[]{\includegraphics[width=0.3\linewidth]{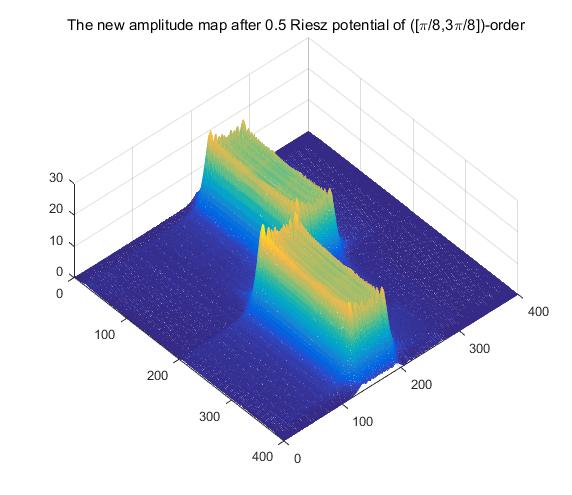} }
\subfigure[]{\includegraphics[width=0.3\linewidth]{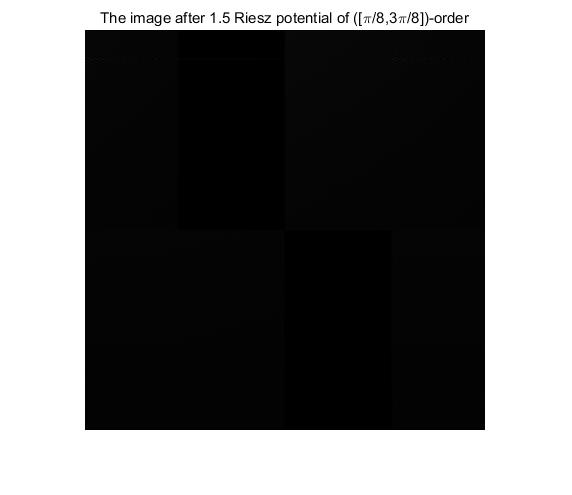} }
\subfigure[]{\includegraphics[width=0.3\linewidth]{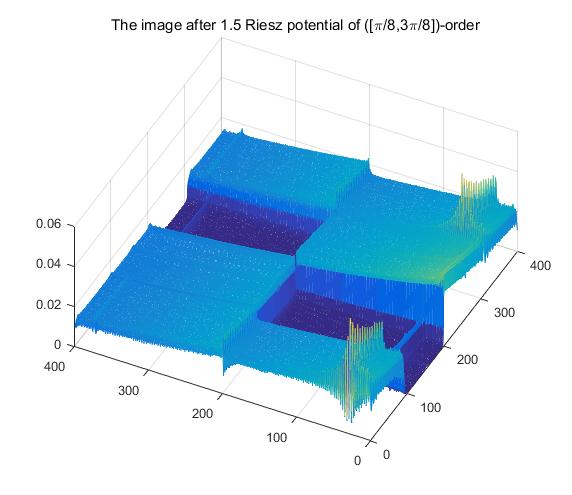} }
\subfigure[]{\includegraphics[width=0.3\linewidth]{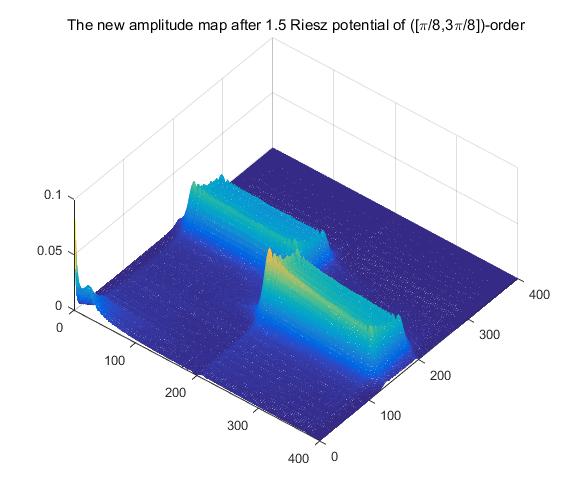} }
\caption{Numerical simulation of $I_{\beta}^{(\pi/8,3\pi/8)}$}
\label{FIG3}%
\end{figure}

As is shown in the above Fig.
\ref{FIG3}, (a) is the original 2-dimensional   grayscale
image with 400 pixels $\times$ 400 pixels; (d), (g), and (k) are
the 2-dimensional   grayscale images, respectively,  after
$I_{1}^{(\pi/8,3\pi/8)}f$, $I_{0.5}^{(\pi/8,3\pi/8)}f$,
and $I_{1.5}^{(\pi/8,3\pi/8)}f$.
   Graphs  (b), (e), (h), and (k) of Fig. \ref{FIG3} are the 3-dimensional
color graphs of $f$, $I_{1}^{(\pi/8,3\pi/8)}f$,
$I_{0.5}^{(\pi/8,3\pi/8)}f$, and $I_{1.5}^{(\pi/8,3\pi/8)}f$.    Graphs  (c), (f), (i), and (l) of Fig. \ref{FIG3} are
the amplitude  images  in the fractional Fourier domain of order
$\boldsymbol{\alpha}=(\pi/8,3\pi/8)$ of
$f$, $I_{1}^{(\pi/8,3\pi/8)}f$,
$I_{0.5}^{(\pi/8,3\pi/8)}f$, and $I_{1.5}^{(\pi/8,3\pi/8)}f$,
respectively.

Comparing     Fig. \ref{FIG2}(i) with     Fig. \ref{FIG3}(i),
    Fig. \ref{FIG2}(f) with     Fig. \ref{FIG3}(f), and
Fig. \ref{FIG2}(l) with   Fig. \ref{FIG3}(l), we find that,  when
 ${\boldsymbol{\alpha}}$ changes  and $\beta$  remains unchanged,
the  image amplitude changes dramatically.
Comparing  (f), (i), and (l) of both Fig. \ref{FIG2} and
Fig. \ref{FIG3}, we conclude that, when  $\beta$   changes and
${\boldsymbol{\alpha}}$ remains unchanged,
the  picture amplitude also changes dramatically.
Graphs  (c), (f), (i), and (l) of both  Fig. \ref{FIG2}
and   Fig. \ref{FIG3}  indicate that the   symbol of the
fractional Riesz potential can correspondingly dramatically  change the amplitude in the
fractional Fourier domain by adjusting parameters
 ${\boldsymbol{\alpha}}$ and $\beta$.
To sum up, Fig. \ref{FIG2} and Fig. \ref{FIG3} show that
the fractional Riesz potential is    an amplitude modulator.
This  is quite different from   the  fractional  multiplier
of the fractional Riesz transform, which is not only a
phase-shift converter but also an amplitude attenuator.

In conclusion, we known that the fractional Hilbert transform  related to chirp functions has
a phase-shift effect in the fractional Fourier domain. However,  the
fractional Riesz transform related to chirp functions not only has a phase-shift in the
fractional Fourier domain, but also can attenuate amplitude.
 Moreover, the fractional Riesz potential  related to chirp functions can change the amplitude
in the fractional Fourier domain. Since these three transforms behave quite different due to  their different
multipliers or symbols, we predict that these three transforms
will play quite different important roles in signal processing.

\section{Image encryption with double phase coding based
on fractional Riesz potentials related to chirp functions}\label{sec4}

With the development of broadband network and multimedia
technology, the acquisition, transmission, and processing
of image data spread to all corners of the digital era.
Security issues are also becoming increasingly serious.
Many image datum need to be transmitted and stored
confidentially, such as photographs taken by satellites,
architectural drawings from financial institutions, and,
in the telemedicine system, patient records and medical images.

In \cite{gbjr1998,ujs2000},   Goudail et al. and Unnikrishnan et al. proposed a double
phase coding image encryption method based on the FRFT.
Compared with the double phase coding image encryption method
based on the Fourier transform in \cite{jszg1997,rj1995},
in addition to the phase mask, the improved double phase
coding encryption key increases the order of the FRFT twice,
and hence expands the key space. When the order is
unknown, it will not be normally decrypted.
Now, we propose a new image encryption method based
on the fractional Riesz potential related to chirp functions with double phase coding.
That is, we   change the amplitude of the FRFT domain
through the symbol of the fractional Riesz potential related to chirp functions,
whose symbol, together with
the order of the FRFT,  provides greater degrees of freedom,
expands the key space, and improves the security of the
protected information.
The following Fig. \ref{FIG4}  exactly  explains this encryption processing.

\begin{figure}[H]
\centering
\includegraphics[width=0.9\linewidth]{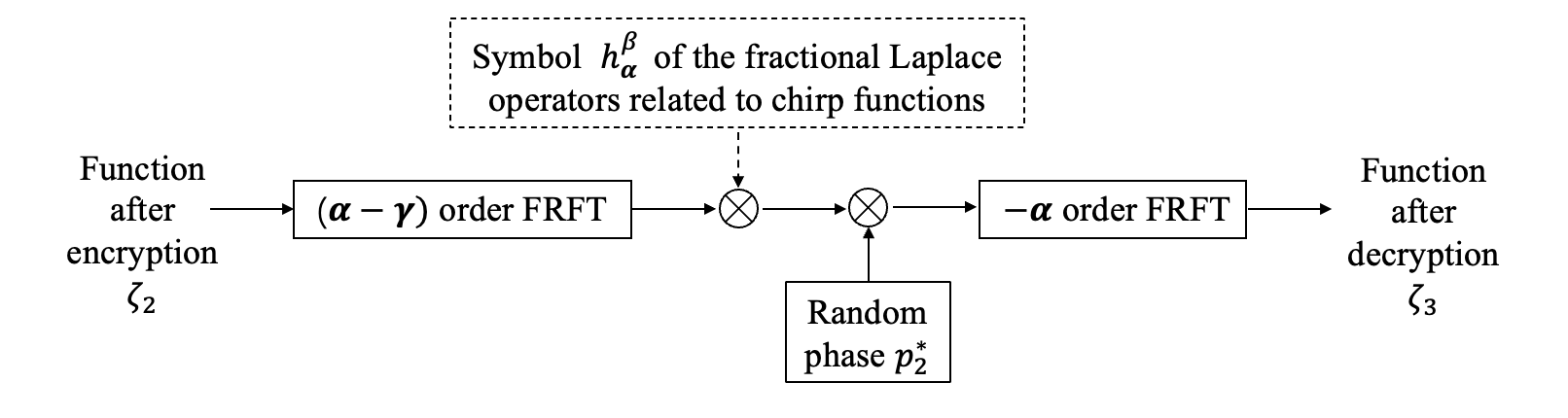}
\caption{Encryption  processing   with double phase coding
based on $I_\beta^{\boldsymbol{\alpha}}$}
\label{FIG4}%
\end{figure}

 As is shown in  Fig. \ref{FIG4},   the input function $\zeta_1(\boldsymbol{x})$ represents
the image to be encrypted after the normalization  and the pixel
value range is $[0,1]$. The function after encryption
$\zeta_2(\boldsymbol{x})$ represents the encryption   image.
The random phases $p_1$ and $p_2$ are given, respectively, by   $p_1(\boldsymbol{x}):=e^{2\pi i n_1(\boldsymbol{x})}$ and
$p_2(\boldsymbol{x}):=e^{2\pi i n_2(\boldsymbol{x})}$ for any $\boldsymbol{x}\in \mathbb{R}^2$, which are
called the random phase masks, where $n_1(\boldsymbol{x})$ and
$n_2(\boldsymbol{x})$  are two statistically independent white
sequences uniformly distributed on $[0,1]$. If the symbol of
the fractional Riesz potential related to chirp functions is not added in the above  encryption processing, this
image encryption  processing goes back to the image encryption with double
phase coding based on FRFT.

Conversely, we next give the following decryption processing  with double
phase coding based on the fractional
Riesz potential related to chirp functions.

\begin{figure}[H]
\centering
\includegraphics[width=0.9\linewidth]{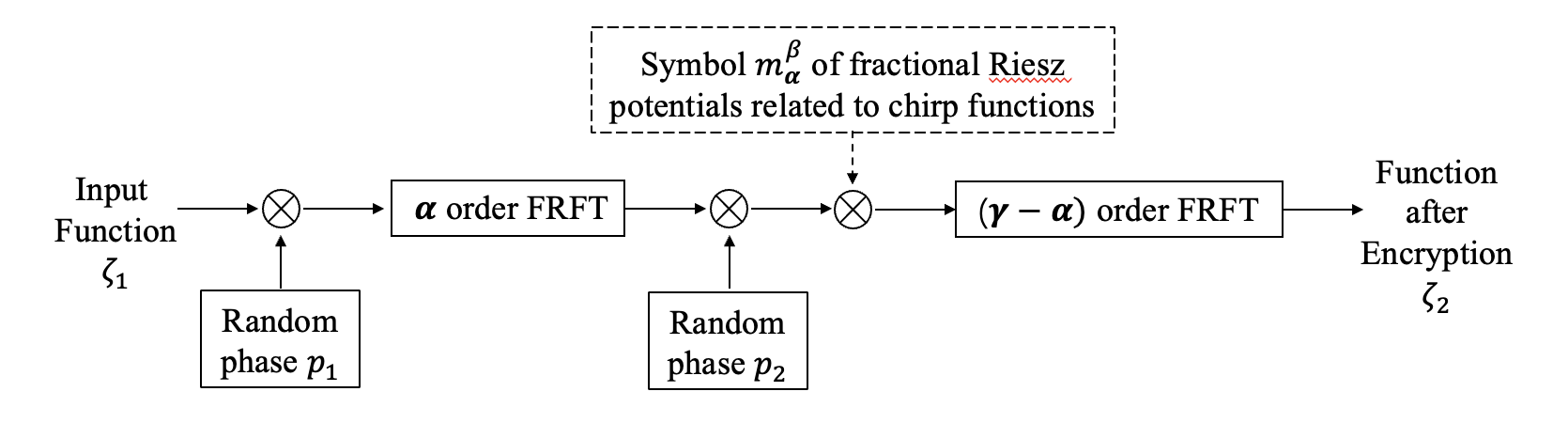}
\caption{Decryption processing  with double
phase coding based on $I_\beta^{\boldsymbol{\alpha}}$}
\label{FIG5}%
\end{figure}

 As is shown in  Fig. \ref{FIG5},
the decryption processing is also the inverse of the encryption
processing. From Section \ref{sec2} we   deduce that the symbol
of the fractional Laplace operator related to chirp functions
multiplied by the symbol of the fractional Riesz potential related to chirp functions
is $1$, that is, the fractional Laplace operator related to chirp
functions can be regarded as the inverse operation of the
fractional Riesz potential related to chirp functions. The random phase $p^*_2$ is defined by setting
$p^*_2(\boldsymbol{x}):=e^{-2\pi i n_2(\boldsymbol{x})}$ for any $\boldsymbol{x}\in \mathbb{R}^2$, which is the
conjugate of the random phase mask $p_2$  in Fig. \ref{FIG4}, and the function after
decryption $\zeta_3(\boldsymbol{x})$ represents   the image
after decryption.

Now,  we study the simulation of both the image encryption and the image decryption.  The following Fig. \ref{FIG6} illustrates the digital simulation of both the
 encryption and the decryption using classical pictures.

\begin{figure}[H]
\centering
\subfigure[]{\includegraphics[width=0.3\linewidth]{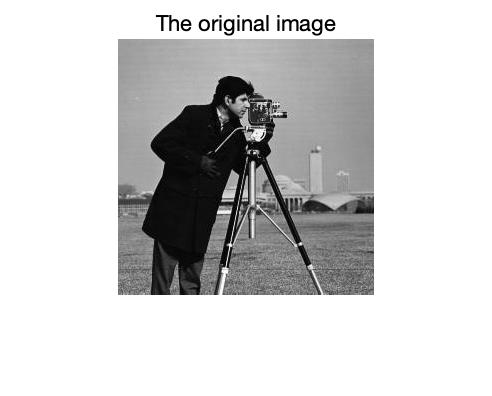} }
\subfigure[]{\includegraphics[width=0.3\linewidth]{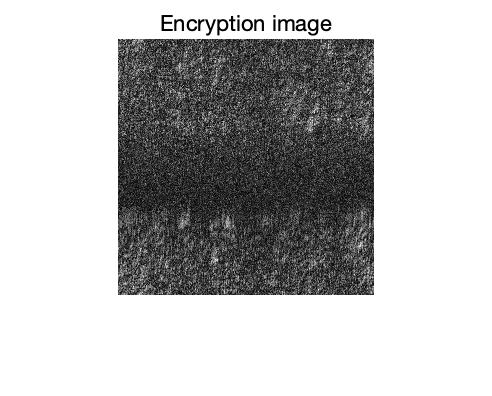} }
\subfigure[]{\includegraphics[width=0.3\linewidth]{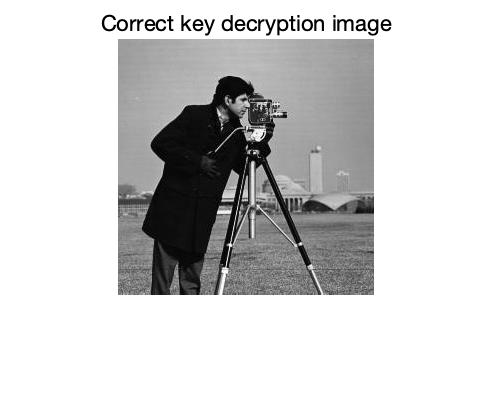} } 	
\end{figure}

\begin{figure}[H]
\centering	
\subfigure[]{\includegraphics[width=0.3\linewidth]{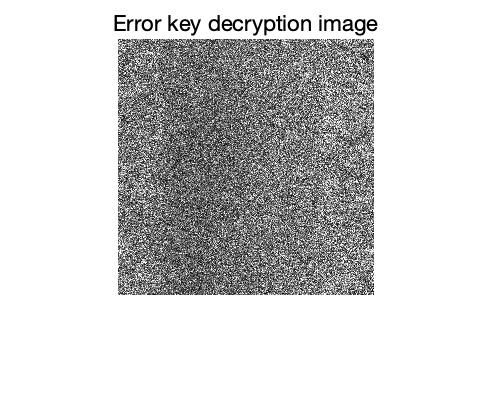} }
\subfigure[]{\includegraphics[width=0.3\linewidth]{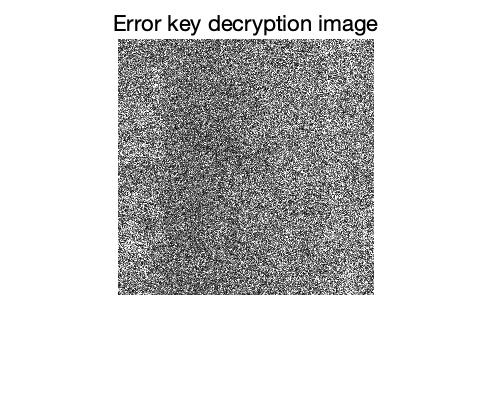} }
\subfigure[]{\includegraphics[width=0.3\linewidth]{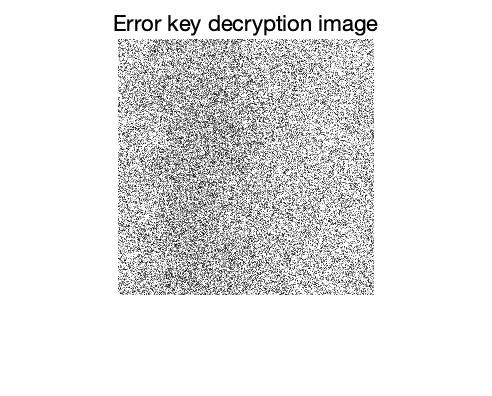} }	
\caption{Simulation of image  encryption and image decryption}
\label{FIG6}%
\end{figure}

 As is shown in Fig. \ref{FIG6}, (a) is the test image with
 256 pixels $\times$ 256 pixels
 of image encryption; (b) is the image after encryption;
(c) is the image after decryption with correct key;
(d), (e), and (f) are the images after decryption with
wrong key, respectively.  The simulation passwords, parameter, and images  of both the image encryption and the image decryption   are presented in the following Table \ref{tab:1}.

\begin{table}[H]
\centering
\begin{tabular}{ccc}
\toprule[1.5pt]
\textbf{Password} & \textbf{Parameters:} $\boldsymbol{\alpha}
=({\alpha _1},{\alpha _2}),\ \beta ,\ \boldsymbol{\gamma}
= ({\theta _1},{\theta _2})$  & \textbf{Image} \\
\midrule[1pt]
correct password &$\boldsymbol{\alpha}  =
(\frac{7\pi }{8},\frac{{5\pi }}{8}),\ \beta
= 0.75,\ \boldsymbol{\gamma}  = (\frac{\pi }{4},
\frac{3\pi }{8})$
& (c) in Fig. \ref{FIG6}\\
error password &$\boldsymbol{\alpha}=
(\frac{7\pi }{8},\frac{{5\pi }}{8}),\ \beta=
0.75,\ \boldsymbol{\gamma}=(\textcolor[rgb]{1, 0, 0}
{\frac{(1+0.05)\pi }{4}},\frac{3\pi }{8})$
& (d) in Fig. \ref{FIG6} \\
error password &$\boldsymbol{\alpha}=
(\textcolor[rgb]{ 1,  0,  0}{\frac{(7+0.1)\pi }{8}},
\frac{5\pi }{8}),\ \ \beta=0.75,\
\boldsymbol{\gamma} =( \frac{\pi }{4},
\frac{3\pi }{8})$    & (e) in Fig. \ref{FIG6} \\
error password &$\boldsymbol{\alpha}=
(\frac{7\pi }{8},\frac{5\pi }{8}),\ \ \beta=
\textcolor[rgb]{ 1,  0,  0}{0.85},\ \boldsymbol{\gamma}=
(\textcolor[rgb]{ 1,  0,  0}{\frac{(1+0.05)\pi }{4}},
\frac{3\pi }{8})$ & (f) in Fig. \ref{FIG6} \\
\bottomrule[1.5pt]
\end{tabular}
\caption{Decryption images corresponding to different keys}
\label{tab:1}
\end{table}

  As is shown   in Fig.
\ref{FIG6} and Table \ref{tab:1}, even if  one
knows that the double phase coding image encryption method based on the
fractional Riesz potential related to chirp functions is used, even if one changes
only one parameter of the correct keys
in (c)  and (d) in Fig. \ref{FIG6}, one cannot obtain the
encrypted original image, let alone we have five parameters.
 In other words, our parameters and symbols provide more degrees
of freedom and make the information more secure.

 The following Fig. \ref{FIG7} presents the   mean square error (for short, MSE)
curves of the decrypted image and the original image when
there are different deviations of  the keys for both the
double phase coding image encryption  based on the FRFT and the
double phase coding image encryption  based on
the fractional Riesz potential related to chirp functions, respectively.

\begin{figure}[H]
\centering
\includegraphics[width=1\linewidth]{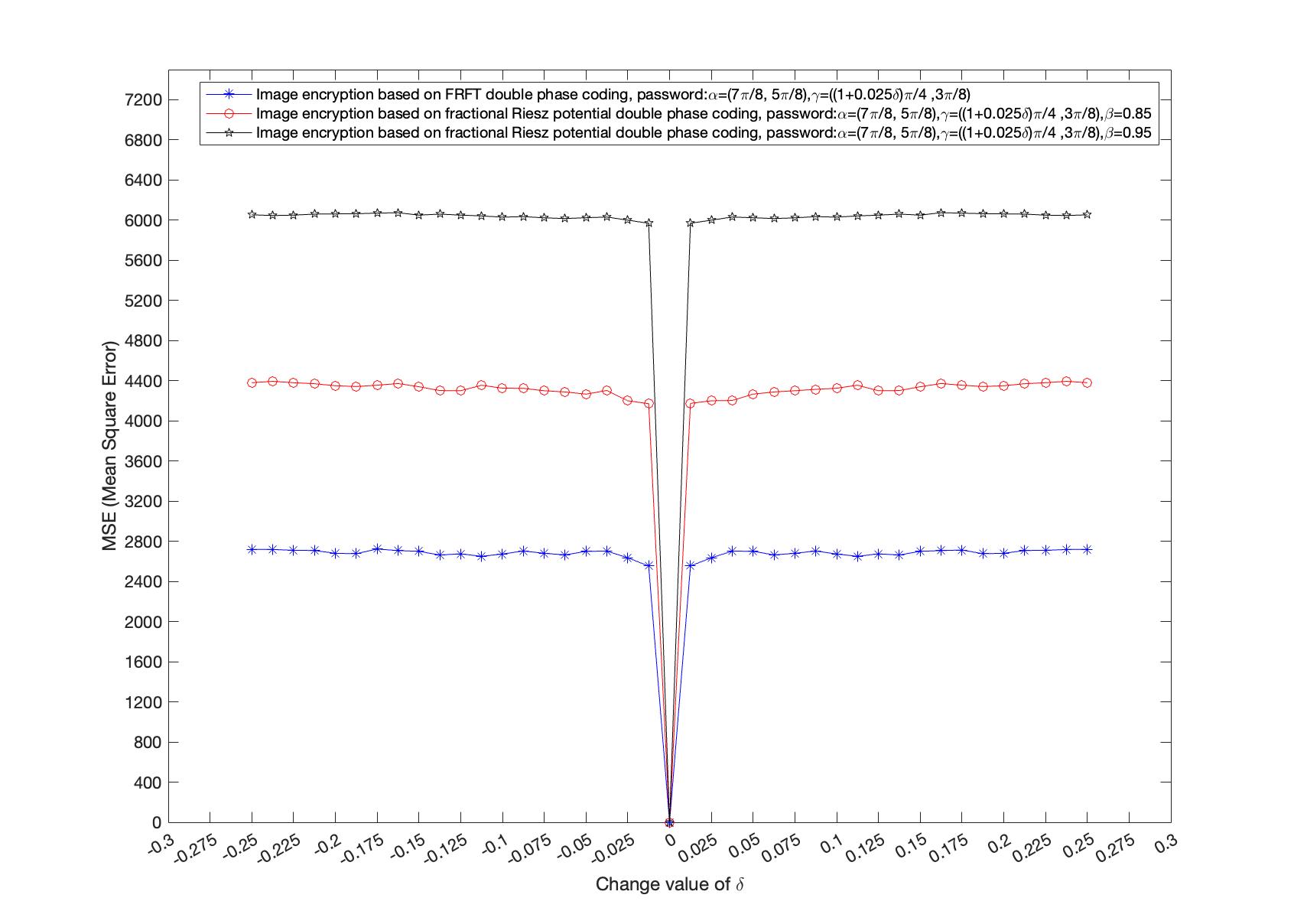}
\caption{When the key is wrong, the MSE curve of the double phase
coded image decryption based on the FRFT and  based on $I_\beta^{\boldsymbol{\alpha}}$ compared to the original image, respectively.}
\label{FIG7}%
\end{figure}

The correct passwords for the image encryption with double
phase coding  based on the FRFT are $\alpha=(7\pi/8, 5\pi/8)$ and
$\gamma=(\pi/4,3\pi/8)$; the correct passwords for the image
encryption with double phase coding based on the fractional
Riesz potentials related to chirp functions are $\alpha=(7\pi/8, 5\pi/8)$,
$\gamma=(\pi/4,3\pi/8)$, and $\beta=0.75$.
From Fig. \ref{FIG7}, we   infer that the parameter
$\beta$ of the double phase coded image encryption
based on the fractional Riesz transform related to chirp functions  greatly
improves the security compared with the double phase coded
image encryption based on the FRFT, and we    also deduce
 that the key is much more secure   when blindly decrypting.

To sum up, the above discussions reveal that the fractional Riesz
potential related to chirp functions can be applied to image encryption,
and the encryption effect is powerful.

\section{Conclusions}\label{sec5}

In this article, we introduce fractional Riesz potentials related to chirp functions,
establish their relations with the FRFT, the fractional Laplace operator
related to chirp functions, and the fractional Riesz transform related to chirp functions.
We apply the fractional Riesz potential related to chirp functions   to the
image encryption. Our experiments show that the symbol
of fractional Riesz potential related to chirp functions essentially expands the key space and
greatly improves the security of images.

\medskip

\bigskip

\noindent Zunwei Fu

\smallskip

\noindent School of Mathematics and Statistics, Linyi University,
Linyi 276000, People's Republic of China; College of Information Technology, The University of Suwon, Hwaseong-si 18323,  South Korea

\smallskip

\noindent{\it E-mail:} \texttt{zwfu@suwon.ac.kr}

\bigskip

 \noindent Yan Lin and Shuhui Yang

\smallskip

\noindent School of Science, China University of Mining and
Technology, Beijing 100083,  People's Republic of China

\smallskip

\noindent{\it E-mails:} \texttt{linyan@cumtb.edu.cn} (Y. Lin)

\noindent\phantom{{\it E-mails:} }\texttt{yangshuhui@student.cumtb.edu.cn} (S. Yang)

\bigskip

\noindent Dachun Yang (Corresponding author)

\smallskip

\noindent  Laboratory of Mathematics and Complex
Systems (Ministry of Education of China),
School of Mathematical Sciences, Beijing Normal
University, Beijing 100875, People's Republic of China

\smallskip

\noindent{\it E-mail:} \texttt{dcyang@bnu.edu.cn}

\end{document}